\newtheorem{theorem}{Theorem}[section]
\newtheorem{lemma}[theorem]{Lemma}
\theoremstyle{definition}
\newtheorem{definition}[theorem]{Definition}
\theoremstyle{remark}
\newtheorem{remark}[theorem]{Remark}
\newcommand{\norm}[1]{\left\lVert#1\right\rVert}
\DeclarePairedDelimiter{\ceil}{\lceil}{\rceil}
\newcommand{\remin}{\mathop{-\!\!\!\!\!\hspace*{1mm}\raisebox{0.5mm}{$
\cdot$}}\nolimits}
\begin{document}

\title[Quantitative results on differences of monotone operators]{Quantitative results on algorithms for zeros of differences of monotone operators in Hilbert space}

\author[Nicholas Pischke]{Nicholas Pischke}
\date{\today}
\maketitle
\vspace*{-5mm}
\begin{center}
{\scriptsize Department of Mathematics, Technische Universit\"at Darmstadt,\\
Schlossgartenstra\ss{}e 7, 64289 Darmstadt, Germany, \ \\ 
E-mail: pischkenicholas@gmail.com}
\end{center}

\maketitle
\begin{abstract}
We provide quantitative information in the form of a rate of metastability in the sense of T. Tao and (under a metric regularity assumption) a rate of convergence for an algorithm approximating zeros of differences of maximally monotone operators due to A. Moudafi by using techniques from `proof mining', a subdiscipline of mathematical logic. For the rate of convergence, we provide an abstract and general result on the construction of rates of convergence for quasi-Fej\'er monotone sequences with metric regularity assumptions, generalizing previous results for Fej\'er monotone sequences due to U. Kohlenbach, G. L\'opez-Acedo and A. Nicolae.
\end{abstract}
\noindent
{\bf Keywords:} Maximally monotone operators; Zeros of set-valued operators; DC programming; Fej\'er monotonicity; Proof mining.\\ 
{\bf MSC2010 Classification:} 47H05; 47J25; 03F10; 47H09

\section{Introduction}
In \cite{Mou2015}, Moudafi introduced an algorithm for approximating critical points of differences of two maximally monotone operators by generalizing the method of proximity operators in DC programming (see below). Specifically, given two maximally monotone operators $T,S$ on a real Hilbert space $X$, one wants to find points
\[
x^*\in\Gamma:=\{x^*\in X\mid T(x^*)\cap S(x^*)\neq\emptyset\}.
\]

If $T$ and $S$ are both subdifferentials of convex functions $f,g$, respectively, this covers the prominent case of \emph{DC programming}, i.e. mathematical programming for \emph{differences of convex functions} (see e.g. \cite{HT1999,TD2018}) and the solution will be a critical point of $f-g$.

The algorithm given in \cite{Mou2015} is motivated by noting that $\mathrm{gra}{T_\lambda}\to\mathrm{gra}T$ for $\lambda\to 0$ where $T_{\lambda}(x)=\frac{x-J^T_{\lambda}x}{\lambda}$ is the Yosida approximate of $T$ and $J^T_{\lambda}=(Id+\lambda T)^{-1}$ is the resolvent of $\lambda T$. This leads to regularizing the above problem to finding points $x_\lambda$ with
\[
T_\lambda(x_\lambda)\in S(x_\lambda)
\]
for $\lambda\to 0$. This inclusion can be equivalently phrased as a fixed point problem
\[
x_\lambda=J^S_\mu(x_\lambda+\mu T_\lambda x_\lambda)
\]
with parameter $\mu>0$. This then leads to the iteration scheme
\[
x_{n+1}=J^S_{\mu_n}(x_n+\mu_n T_{\lambda_n}x_n)
\]
given initial data $x_0$ and parameters $\mu_n,\lambda_n>0$ which, under suitable assumptions on the parameters, can be shown to be convergent:
\begin{theorem}[\cite{Mou2015}]\label{thm:MoudafiConv}
Let $T,S$ be two maximally monotone operators on a finite dimensional Hilbert space $X$ such that $\Gamma\neq\emptyset$, $T$ is bounded on bounded sets and $\mathrm{dom}S\subseteq\mathrm{dom}T$ as well as
\begin{enumerate}
\item $\lim_{n\to\infty}\lambda_n=0$,
\item $\sum_{n=0}^\infty\frac{\mu_n}{\lambda_n}<\infty$,
\item $\lim_{n\to\infty}\norm{x_n-x_{n+1}}/\mu_n=0$,
\end{enumerate}
for $\lambda_n,\mu_n>0$. Then $(x_n)$ converges to a point $x^*\in\Gamma$.
\end{theorem}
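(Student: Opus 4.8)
The plan is to recognize that convergence of the sequence to \emph{some} limit is essentially forced by the hypotheses, so that the real content is the identification of this limit as a point of $\Gamma$. First I would rewrite the iteration as an inclusion: applying $(Id+\mu_n S)$ to $x_{n+1}=J^S_{\mu_n}(x_n+\mu_n T_{\lambda_n}x_n)$ gives
\[
u_n:=\frac{x_n-x_{n+1}}{\mu_n}+T_{\lambda_n}x_n\in S(x_{n+1}).
\]
Next I would extract summability of the step sizes from (1) and (2): since $\lambda_n\to 0$, eventually $\lambda_n\le 1$ and hence $\mu_n\le\mu_n/\lambda_n$, so a tail comparison yields $\sum_n\mu_n<\infty$. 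Writing $\norm{x_n-x_{n+1}}=(\norm{x_n-x_{n+1}}/\mu_n)\,\mu_n$ and using (3) (so the first factor is bounded and tends to $0$), I obtain $\sum_n\norm{x_n-x_{n+1}}<\infty$. Thus $(x_n)$ is Cauchy and converges to some $x^*\in X$; in particular $(x_n)$ is bounded. (One could instead route this through a quasi-Fej\'er inequality for $\Gamma$, but that is unnecessary here, as (3) together with $\sum_n\mu_n<\infty$ already delivers a Cauchy sequence.)

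To identify $x^*$, I would pass to the limit in the inclusion $u_n\in S(x_{n+1})$ using sequential closedness of the graphs of $T$ and $S$. The key quantitative input is a uniform bound on the Yosida terms $T_{\lambda_n}x_n$. Since $x_{n+1}\in\mathrm{ran}\,J^S_{\mu_n}=\mathrm{dom}\,S\subseteq\mathrm{dom}\,T$, every iterate from index $1$ on lies in $\mathrm{dom}\,T$, so the standard estimate $\norm{T_{\lambda_n}x_n}\le\abs{Tx_n}$ applies, where $\abs{Tx_n}$ denotes the norm of the minimal-norm element of $Tx_n$. As $(x_n)$ is bounded and $T$ is bounded on bounded sets, $(T_{\lambda_n}x_n)$ is bounded, and by finite-dimensionality some subsequence satisfies $T_{\lambda_{n_k}}x_{n_k}\to v$ for a suitable $v\in X$.

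Then I would close the two graphs. From $J^T_{\lambda}x=x-\lambda T_{\lambda}x$ I get $J^T_{\lambda_{n_k}}x_{n_k}=x_{n_k}-\lambda_{n_k}T_{\lambda_{n_k}}x_{n_k}\to x^*$, because $\lambda_{n_k}\to 0$ while $(T_{\lambda_{n_k}}x_{n_k})$ is bounded; since $T_{\lambda_{n_k}}x_{n_k}\in T(J^T_{\lambda_{n_k}}x_{n_k})$, closedness of the graph of $T$ gives $v\in T(x^*)$. Likewise, (3) forces $(x_{n_k}-x_{n_k+1})/\mu_{n_k}\to 0$, so $u_{n_k}\to v$; since $u_{n_k}\in S(x_{n_k+1})$ and $x_{n_k+1}\to x^*$, closedness of the graph of $S$ gives $v\in S(x^*)$. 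Hence $v\in T(x^*)\cap S(x^*)$, i.e.\ $x^*\in\Gamma$, which is the claim.

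The main obstacle I anticipate is not the convergence to a limit but the clean identification of that limit as a critical point. Concretely, the delicate points are securing the uniform boundedness of $(T_{\lambda_n}x_n)$ — precisely where $\mathrm{dom}\,S\subseteq\mathrm{dom}\,T$ and boundedness of $T$ on bounded sets enter — and correctly invoking sequential closedness of the graphs of the maximally monotone operators $T$ and $S$, after verifying that $\lambda_{n_k}T_{\lambda_{n_k}}x_{n_k}\to 0$ so that the resolvent arguments $J^T_{\lambda_{n_k}}x_{n_k}$ and the points $x_{n_k+1}$ both converge to $x^*$. Finite-dimensionality is used only to upgrade boundedness of $(T_{\lambda_n}x_n)$ to a strongly convergent subsequence.
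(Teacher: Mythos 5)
Your proof is correct, but for the convergence half it takes a genuinely different route from the one the paper (following Moudafi) is built on. The paper's reading of Moudafi's argument runs through quasi-Fej\'er monotonicity of $(x_n)$ with respect to $\Gamma$ --- this is exactly what Lemma \ref{lem:qfejerinequality} extracts --- which is where the hypothesis $\Gamma\neq\emptyset$ enters (to bound the iterates), followed by the standard convergence theory for bounded quasi-Fej\'er monotone sequences whose cluster points lie in the target set. You instead note that (1) and (2) give $\sum_n\mu_n<\infty$ and that the full limit in (3) makes $\norm{x_n-x_{n+1}}/\mu_n$ bounded, so $\sum_n\norm{x_n-x_{n+1}}<\infty$ and $(x_n)$ is Cauchy outright; this is sound and even shows that $\Gamma\neq\emptyset$ is redundant under the stated hypotheses (it is implied by the conclusion anyway). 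Your identification of the limit --- the bound $\norm{T_{\lambda_n}x_n}\leq\norm{T^\circ x_n}$ for $n\geq 1$ via $\mathrm{ran}\,J^S_{\mu_n}=\mathrm{dom}\,S\subseteq\mathrm{dom}\,T$, a convergent subsequence by finite-dimensionality, and closedness of the graphs of $T$ and $S$ --- coincides with Moudafi's and is precisely the step the paper isolates in Remark \ref{rem:logic} as the use of extensionality. What the Fej\'er route buys, and your shortcut does not, is robustness under weakening (3) to a $\liminf$ condition: there $\norm{x_n-x_{n+1}}/\mu_n$ need not be bounded and your summability argument collapses, whereas the quasi-Fej\'er structure survives and is what the paper's entire extraction of metastability and convergence rates rests on. Conversely, your argument makes transparent that a genuine rate of convergence for (3) plus a Cauchy rate for $\sum_n\mu_n$ would already yield a Cauchy rate for $(x_n)$ without any regularity assumption; the paper deliberately works with the much weaker $\liminf$-bound $\phi$, for which only metastability is extractable.
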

Moudafi's results does not give any quantitative information on the convergence. However, by inspection of the proof given in \cite{Mou2015}, it becomes apparent that it relies on a standard argument via establishing (quasi-)Fej\'er monotonicity (see \cite{Com2001}) and then inferring convergence from that.\footnote{Although Fej\'er monotonicity is not mentioned explicitly in \cite{Mou2015}.} In terms of quantitative results, this opens the door for applying the recent results of Kohlenbach, Leu\c{s}tean and Nicolae \cite{KLN2018} as well as of Kohlenbach, L\'opez-Acedo and Nicolae \cite{KLAN2019} on the finitary content of convergence of (quasi-)Fej\'er monotone sequences, which has been successfully applied in many other contexts of nonlinear analysis, in particular for the asymptotic regularity of compositions of two mappings \cite{KLAN2017}, the proximal point algorithm in uniformly convex Banach spaces \cite{Koh2021} and subgradient methods for equilibrium problems \cite{PK2021}.\\

The results of the papers \cite{KLN2018,KLAN2019} were obtained via the general methodological approach of `proof mining', a subdiscipline of mathematical logic which aims at the extraction of quantitative information from prima facie nonconstructive proofs by logical transformations (see \cite{Koh2008} for a book treatment and \cite{Koh2019} for a recent survey). This approach has also been instrumental for obtaining the present results.\\

In terms of quantitative information, even for computable Fej\'er monotone sequences of real numbers, in general, there exists no computable rate of convergence which follows from fundamental results in recursion theory (see \cite{KLN2018,Neu2015}).\footnote{These results extend the phenomena of `arbitrary slow convergence' from optimization.} 

However, in very general situations, one can extract effective rates of so-called metastability from non-effective proofs of convergence which are, moreover, highly uniform. This notion of metastability originates from a (noneffectively) equivalent reformulation of the Cauchy property in some metric space $(X,d)$
\[
\forall k\in\mathbb{N}\exists n\in\mathbb{N}\forall i,j\geq n\left(d(x_i,x_j)< \frac{1}{k+1}\right)
\]
into 
\[
\forall k\in\mathbb{N}\forall g\in\mathbb{N}^\mathbb{N}\exists n\in\mathbb{N}\forall i,j\in [n;n+g(n)]\left(d(x_i,x_j)< \frac{1}{k+1}\right)
\]
where $[n;n+m]:=\{n+i\mid i\in\mathbb{N}\land 0\leq i\leq m\}$. From a logical perspective, this reformulation can be recognized as the so-called Herbrand normal form of (a slightly tweaked version of) the above Cauchy property which is in particular of the general form $\forall\exists$ (considering the leading two universal quantifiers as one and disregarding the last universal quantifier as it is bounded) and for statements of the above form, the logical metatheorems of proof mining guarantee the extractability of a rate of metastability, that is a (highly uniform and effective) bound on `$\exists n\in\mathbb{N}$' in the above reformulation (see \cite{Koh2008}). This notion has also been recognized as an important finitary version of the Cauchy property from a non-logical perspective by Tao (see e.g. \cite{Tao2008b,Tao2008a}) who actually coined the term metastability.\\ 

In this paper, we provide a further case study to illustrate how the abstract approach from \cite{KLN2018,KLAN2019} can be used in a particular scenario by obtaining an explicit quantitative version of Theorem \ref{thm:MoudafiConv} in the form of a fully effective and highly uniform rate of metastability. Moreover, in the latter parts of the paper, we even give a rate of convergence, modulo an additional metric regularity assumption in the sense of \cite{KLAN2019}. For this, we extend the main result from \cite{KLAN2019} to the case of quasi-Fej\'er monotone sequences. The quantitative analysis in particular relies on a mild form of uniform continuity of a set-valued operator $T$ similar to that introduced in \cite{KP2020} and we expect that this quantitative notion and its use, together with the whole approach, detailed here will provide a guideline for future analyses of other convergence results from monotone operator theory, especially works dealing with differences of monotone operators like \cite{AO1999,Mou2008,NNHS2009,RAC2020}. In fact, the analysis presented here immediately generalizes to the extensions for Moudafi's result considered in \cite{RAC2020} where one similarly obtains a simple rate of metastability and even a rate of convergence under a metric regularity assumption but we omit any details regarding this.

\section{Quasi-Fej\'er monotonicity, uniform continuity and rates of metastability}
As mentioned in the introduction, the proof of Theorem \ref{thm:MoudafiConv} given in \cite{Mou2015}, and with that the following quantitative analysis, relies on the notion of quasi-Fej\'er monotonicity (see again \cite{Com2001}) which we want to briefly recall. For this, we actually rely on the following generalized version introduced in \cite{KLN2018}. 
\begin{definition}[\cite{KLN2018}]
Let $G:\mathbb{R}_+\to\mathbb{R}_+$ and $H:\mathbb{R}_+\to\mathbb{R}_+$ be functions where
\[
a_n\to 0\text{ implies }G(a_n)\to 0\text{ and }H(a_n)\to 0\text{ implies }a_n\to 0
\]
for any sequence $(a_n)$ from $\mathbb{R}_+$ and let $(X,d)$ be a metric space, $F\subseteq X$ be nonempty and $(x_n)$ be a sequence in $X$. $(x_n)$ is called \emph{quasi-$(G,H)$-Fej\'er monotone with respect to $F$}, if 
\[
\forall n,m\in\mathbb{N}\forall p\in F\left(H(d(x_{n+m},p))\leq G(d(x_n,p))+\sum_{i=n}^{n+m-1}\varepsilon_i\right),
\]
where $(\varepsilon_i)\subseteq\mathbb{R}_+$ is such that $\sum_i\varepsilon_i<\infty$.
\end{definition}
For formulating the quantitative results, we pass from quasi-Fej\'er monotonicity to \emph{uniform} quasi-Fej\'er monotonicity with an accompanying modulus in the sense of \cite{KLN2018}. To do this, we assume a corresponding stratification of $F$ by sets $AF_k$ s.t.
\[
AF_k\supseteq AF_{k+1}\text{ and }F=\bigcap_{k\in\mathbb{N}}AF_k.
\]
Intuitively, the sets $AF_k$ are meant to represent the set of $k$-good approximations of the set $F$ and can take many forms in an actual application.
\begin{definition}[\cite{KLN2018}]
Let $G,H$ be as before. Then $(x_n)$ is called \emph{uniformly quasi-$(G,H)$-Fej\'er monotone with respect to $F$} (\emph{and $(AF_k)$}) if for all $r,n,m\in\mathbb{N}$:
\[
\exists k\in\mathbb{N}\forall p\in AF_k\forall l\leq m\left(H(d(x_{n+l},p))<G(d(x_n,p))+\sum_{i=n}^{n+l-1}\varepsilon_i+\frac{1}{r+1}\right).
\]
Any function $\chi(n,m,r)$ producing an upper bound on such a $k\in\mathbb{N}$ is called a \emph{modulus of uniform quasi-$(G,H)$-Fej\'er monotonicity for $(x_n)$}.
\end{definition}
As a second ingredient, we need quantitative information on how the sequence $(x_n)$ approaches the set $F$ w.r.t. the stratification $AF_k$, in the sense of the following definition:
\begin{definition}[\cite{KLN2018}]
$(x_n)$ has the \emph{$\liminf$-property w.r.t. $F$ (and $(AF_k)$)} if $\forall k,n\in\mathbb{N}\exists N\geq n\left(x_N\in AF_k\right)$. A bound $\Phi(k,n)$ on $N$, which is monotone in $k$ and $n$, is called a \emph{$\liminf$-bound for $(x_n)$}.
\end{definition}
Using the proof mining macro established in \cite{KLN2018}, one can then combine these moduli, together with some further (minor) quantitative assumptions on the surrounding data, to a rate of metastability of the sequence. As discussed in \cite{KLN2018}, these moduli are guaranteed to exist (in very general situation) by logical metatheorems and can often be obtained by a separate application of corresponding logical bound extraction results of proof mining and it is the extraction of these moduli from the proof given in \cite{Mou2015} which we detail (without any reference to logic) in the following.\\

For that, we will in particular rely on a certain notion of uniform continuity for a set-valued operator $T$ which generalizes the usual notion of uniform continuity (as, e.g., stipulated in \cite{MN2001}) for a set-valued operator $T$
\[
\forall\varepsilon>0\exists\delta>0\forall x,y\in\mathrm{dom}T\left(\norm{x-y}\leq\delta\rightarrow H(Tx,Ty)\leq\varepsilon\right)
\]
where $H$ is the Hausdorff-metric.\footnote{We have used $H$ for another object before but the context will make it clear whether the Hausdorff-metric is meant.} Motivated by logical considerations (see Remark \ref{rem:logic}), \cite{KP2020} introduced an `approximate version' of the Hausdorff metric in the form of a \emph{Hausdorff-like predicate} $H^*$ defined via
\[
H^*[P,Q,\varepsilon]:=\forall p\in P\exists q\in Q\left(\norm{p-q}\leq\varepsilon\right).
\]
One can then stipulate uniform continuity w.r.t. that predicate $H^*$ by requiring
\[
\forall\varepsilon>0\exists\delta>0\forall x,y\in X\left(\norm{x-y}\leq\delta\rightarrow H^*[Tx,Ty,\varepsilon]\right).
\]
We say that $\varpi:\mathbb{N}\to\mathbb{N}$ is a \emph{modulus of uniform continuity for $T$ w.r.t. $H^*$} if
\[
\forall k\in\mathbb{N}\forall x,y\in X\left(\norm{x-y}\leq\frac{1}{\varpi(k)+1}\rightarrow H^*\left[Tx,Ty,\frac{1}{k+1}\right]\right)
\]
Our analysis will in the following rely on such a modulus of uniform continuity for $T$ w.r.t. $H^*$ (see again Remark \ref{rem:logic}). For convenience, we will assume that $\varpi$ is monotone increasing.\\

Now, assume that $\mathrm{dom}S\subseteq\mathrm{dom} T$ and let $L\geq\mathrm{diam}\{x_n\mid n\in\mathbb{N}\}$ for a concrete sequence $(x_n)$ of the algorithm. Throughout, we will actually work over the compact space $X_0=\overline{B}(x_0;L)\cap\overline{\mathrm{dom}S}$ and all sets and moduli are to be understood as being relativized to this set. We first define appropriate instantiations $\Gamma_k$ of the abstract approximations $AF_k$ discussed before in the context of Moudafi's algorithm by setting
\begin{align*}
&\Gamma_k:=\bigg\{x^*\in X_0\mid \exists y^*\bigg(\vert\norm{y^*}-\norm{T^\circ x^*}\vert\leq\frac{1}{k+1}\land H^*\left[y^*,Tx^*,\frac{1}{k+1}\right]\\
&\qquad\qquad\qquad\qquad\qquad\qquad\land \forall i\leq k\left(\norm{x^*-J^S_{\mu_i}(x^*+\mu_iy^*)}\leq\frac{1}{k+1}\right)\bigg)\bigg\}
\end{align*}
where $H^*$ is the previously discussed Hausdorff-like predicate (where we write $y^*$ for the singleton $\{y^*\}$) and $T^\circ x=P_{Tx}0$ is the element of minimal norm in $Tx$ (see, e.g., \cite{BC2017}). We write $\Gamma_k(x^*)$ for the set of all such $y^*$ realizing the existential quantifier in the above definition with parameter $x^*$.\\

The motivation for this particular stratification of the set of solutions $\Gamma$ very much follows the reasoning given by Moudafi in \cite{Mou2015} for the algorithm as discussed in the introduction: $T(x^*)\cap S(x^*)$ is regularized to $T_\lambda(x_\lambda)\in Sx_\lambda$ for $\lambda\to 0$, which is equivalent to solving the fixed-point equation $x_\lambda=J^S_\mu(x_\lambda+\mu T_\lambda x_\lambda)$. To formalize what it means for $x^*$ to be a $k$-good approximation of a solution in $\Gamma$, we replace $T_\lambda(x_\lambda)$, itself an approximation for a point in the intersection, by a generic point $y^*$ which is supposed to be a $k$-good approximation of $T_\lambda(x^*)$ for suitable $\lambda$, formalized via being close to $Tx^*$ in the sense of the Hausdorff-like predicate. In the same way, the fixed point condition $x_\lambda=J^S_\mu(x_\lambda+\mu T_\lambda x_\lambda)$ is relativized to an approximate fixed point condition where the parameter $\mu$ is replaced by the sequence $\mu_i$ from the algorithm. In that way, the fixed-point perspective of this algorithm is essential for the following quantitative analysis as it lends itself to useful approximate versions.\\

We begin by showing that the $\Gamma_k$ are appropriate approximate versions of $\Gamma$. For that, we actually show two things. First, we have $\bigcap_k\Gamma_k\subseteq\Gamma$, i.e. an arbitrarily good approximation in the sense of the $\Gamma_k$ is actually a solution. Second, the $\Gamma_k$ are good approximate sets in the sense that $\bigcap_k\Gamma_k$ is uniformly closed w.r.t. the $\Gamma_k$ in the following sense:
\begin{definition}[\cite{KLN2018}]
$F$ is called \emph{uniformly closed} (w.r.t. $AF_k$) with moduli $\delta_F,\omega_F:\mathbb{N}\to\mathbb{N}$ if
\[
\forall k\in\mathbb{N}\forall p,q\in X\left( q\in AF_{\delta_F(k)}\land d(p,q)\leq\frac{1}{\omega_F(k)+1}\rightarrow p\in AF_k\right).
\]
\end{definition}
Before we can state the corresponding result on uniform closedness, we need the following lemma which derives a modulus of continuity for $T^\circ$ from the modulus of uniform continuity $\varpi$ for $T$.
\begin{lemma}\label{lem:Tcircmodunifcont}
\begin{enumerate}
\item For all $k\in\mathbb{N}$ and $x,z\in X$, if $z\in Tx$ and $\langle T^\circ x-z,-z\rangle\leq\frac{1}{(k+1)^2}$, then $\norm{T^\circ x-z}\leq\frac{1}{k+1}$.
\item For all $k\in\mathbb{N}$ and $x,z\in X$, if $z\in Tx$ and $\norm{z}^2-\norm{T^\circ x}^2\leq\frac{1}{(k+1)^2}$, then $\norm{T^\circ x-z}\leq\frac{1}{k+1}$.
\item Let $x,x'\in\mathrm{dom} T$ and $B\geq \norm{T^\circ x'}$ with $B\in\mathbb{N}^*$ and let $\varpi$ be a modulus of uniform continuity for $T$. Then, $\varpi'(k)=\varpi(Bk^2+2Bk+B-1)$ satisfies
\[
\norm{x-x'}\leq\frac{1}{\varpi'(k)+1}\rightarrow\norm{T^\circ x-T^\circ x'}\leq\frac{1}{k+1}\tag{$\dagger$}
\]
for all $x$.
\end{enumerate}
\end{lemma}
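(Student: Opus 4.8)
The plan is to treat the three parts in order, using (1) and (2) as the analytic backbone and (3) as the genuine content. The common foundation is that maximal monotonicity of $T$ makes each value $Tx$ nonempty, closed and convex, so that $T^\circ x = P_{Tx}0$ is well defined and characterized by the variational inequality $\langle 0 - T^\circ x, z - T^\circ x\rangle \leq 0$, i.e. $\langle T^\circ x, z\rangle \geq \norm{T^\circ x}^2$, for every $z \in Tx$.

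For part (1) I would expand $\norm{T^\circ x - z}^2 = \langle T^\circ x - z, T^\circ x\rangle + \langle T^\circ x - z, -z\rangle$ and observe that the first summand equals $\norm{T^\circ x}^2 - \langle z, T^\circ x\rangle \leq 0$ by the variational inequality, while the second is bounded by $\frac{1}{(k+1)^2}$ by hypothesis; taking square roots gives the claim. Part (2) then reduces to part (1): using the same variational inequality, $\langle T^\circ x - z, -z\rangle = \norm{z}^2 - \langle T^\circ x, z\rangle \leq \norm{z}^2 - \norm{T^\circ x}^2 \leq \frac{1}{(k+1)^2}$, so the hypothesis of (1) is met and its conclusion follows.

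For part (3), which is the main step, I would set $\epsilon := \frac{1}{B(k+1)^2}$ and note that $\varpi'(k) = \varpi(Bk^2+2Bk+B-1) = \varpi(B(k+1)^2 - 1)$ is chosen precisely so that $\norm{x - x'} \leq \frac{1}{\varpi'(k)+1}$ forces, via the modulus $\varpi$ applied in both directions, $H^*[Tx, Tx', \epsilon]$ and $H^*[Tx', Tx, \epsilon]$. From the first I obtain some $w \in Tx'$ with $\norm{w - T^\circ x} \leq \epsilon$ and from the second some $z \in Tx$ with $\norm{z - T^\circ x'} \leq \epsilon$. Minimality of the norms of the projections ($\norm{T^\circ x} \leq \norm{z}$ and $\norm{T^\circ x'} \leq \norm{w}$), combined with these estimates and $\norm{T^\circ x'} \leq B$, lets me control both $\norm{T^\circ x}$ and the gap $\norm{z}^2 - \norm{T^\circ x}^2$ to order $B\epsilon = \frac{1}{(k+1)^2}$. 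Feeding this into part (2) yields that $\norm{T^\circ x - z}$ is small, and the triangle inequality $\norm{T^\circ x - T^\circ x'} \leq \norm{T^\circ x - z} + \norm{z - T^\circ x'}$ closes the argument; a symmetric bookkeeping of the two cross terms $\langle T^\circ x, T^\circ x' - z\rangle$ and $\langle T^\circ x', T^\circ x - w\rangle$ can be used to sharpen the constants if needed.

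The hard part is part (3), and specifically the \emph{square-root loss}: Hausdorff-closeness of $Tx$ and $Tx'$ at scale $\epsilon$ propagates only to closeness of their minimal-norm selections at scale $\sqrt{B\epsilon}$, since what one directly controls is the difference of \emph{squared} norms. This is exactly why $\varpi'$ must depend quadratically on $k$ (through the factor $(k+1)^2$) rather than linearly, and why the a priori bound $B \geq \norm{T^\circ x'}$ has to enter the modulus, as it is needed to dominate the cross terms relating the two minimal-norm elements. The remaining difficulty is then purely the careful tracking of these constants so that the final estimate matches the stated $\frac{1}{k+1}$.
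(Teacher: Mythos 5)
Your treatments of parts (1) and (2) are correct and coincide with the paper's: part (1) is exactly the identity $\norm{T^\circ x-z}^2=\langle T^\circ x-z,-z\rangle+\langle z-T^\circ x,-T^\circ x\rangle$ combined with the projection inequality $\langle z-T^\circ x,-T^\circ x\rangle\leq 0$, and part (2) reduces to (1) precisely as you describe (the paper leaves (2) to the reader).

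In part (3) you take a genuinely different route, and this is where the proposal falls short of the \emph{stated} modulus. You use both directions of the $H^*$-continuity to produce $w\in Tx'$ near $T^\circ x$ and $z\in Tx$ near $T^\circ x'$, pass to the squared-norm gap $\norm{z}^2-\norm{T^\circ x}^2$, invoke part (2), and finish with a triangle inequality. The claim that this gap is controlled ``to order $B\epsilon=\frac{1}{(k+1)^2}$'' is the step that fails: minimality plus the two approximations only give $0\leq\norm{z}-\norm{T^\circ x}\leq 2\epsilon$ (you must chain $\norm{z}\leq\norm{T^\circ x'}+\epsilon\leq\norm{w}+\epsilon\leq\norm{T^\circ x}+2\epsilon$) and $\norm{z}+\norm{T^\circ x}\leq 2B+2\epsilon$, so the gap is only $\leq 4B\epsilon+4\epsilon^2$. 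Part (2) then yields $\norm{T^\circ x-z}\lesssim\frac{2}{k+1}$, and after the final triangle inequality you have proved the lemma only with a modulus of roughly $\varpi(4B(k+1)^2)$ rather than the stated $\varpi(B(k+1)^2-1)$. Since the whole point of the lemma is the explicit modulus, this is a genuine gap, and it is structural to the detour through part (2) plus a terminal triangle inequality: no bookkeeping of cross terms recovers the factor along this route.

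The paper's argument is calibrated differently. It uses only \emph{one} direction of $H^*$, picking a single $y'\in Tx'$ with $\norm{T^\circ x-y'}\leq\frac{1}{B(k+1)^2}$, and estimates the relevant inner product directly:
\[
\langle T^\circ x-T^\circ x',-T^\circ x'\rangle=\langle T^\circ x-y',-T^\circ x'\rangle+\langle y'-T^\circ x',-T^\circ x'\rangle\leq\norm{T^\circ x-y'}\,\norm{T^\circ x'}\leq\frac{1}{(k+1)^2},
\]
using the projection inequality at $x'$ (legitimate since $y'\in Tx'$) and Cauchy--Schwarz, so that the bound $B$ and the approximation error each enter exactly once; it then concludes by applying part (1) to the pair $(T^\circ x,T^\circ x')$ with no terminal triangle inequality. (That last application tacitly also requires the symmetric term $\langle T^\circ x'-T^\circ x,-T^\circ x\rangle$ to be handled, which is done by the mirror image of the same estimate using the other direction of $H^*$.) If you want your write-up to match the stated $\varpi'$, replace the passage through part (2) and the squared norms by this direct inner-product estimate.
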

\begin{proof}
Note that as $T^\circ x=P_{Tx}(0)$, Theorem 3.16 of \cite{BC2017} yields $\langle y-T^\circ x,-T^\circ x\rangle\leq 0$ for all $x,y$ with $y\in Tx$.
\begin{enumerate}
\item Let $z\in Tx$ and $\langle T^\circ x-z,-z\rangle\leq \frac{1}{(k+1)^2}$. By ($\dagger$), we have $\langle z-T^\circ x,-T^\circ x\rangle\leq 0$. Thus
\begin{align*}
\frac{1}{(k+1)^2}\geq\langle T^\circ x-z,-z\rangle+\langle z-T^\circ x,-T^\circ x\rangle =\norm{T^\circ x-z}^2
\end{align*}
which implies $\norm{T^\circ x-z}\leq\frac{1}{k+1}$.
\item We skip the proof as it is an easy consequence of (1).
\item Let $\norm{x-x'}\leq\frac{1}{\varpi'(k)+1}$. By definition of $\varpi'$, we have that
\[
\exists y'\in Tx'\left(\norm{T^\circ x-y'}\leq\frac{1}{Bk^2+2Bk+B}\right).
\]
Now, using $(\dagger)$ and the Cauchy-Schwarz inequality we obtain
\begin{align*}
\langle T^\circ x-T^\circ x',-T^\circ x'\rangle&\leq\langle T^\circ x-y',-T^\circ x'\rangle+\langle y'-T^\circ x',-T^\circ x'\rangle\\
&=\langle T^\circ x-y',-T^\circ x'\rangle\\
&\leq\norm{T^\circ x-y'}\norm{T^\circ x'}\\
&\leq\frac{1}{(k+1)^2}.
\end{align*}
Item (1) now yields $\norm{T^\circ x-T^\circ x'}\leq\frac{1}{k+1}$.
\end{enumerate}
\end{proof}
\begin{lemma}\label{lem:intersectionGamma}
Let $\varpi$ be a modulus of uniform continuity for $T$ w.r.t. $H^*$ and let $M\in\mathbb{N}^*$ satisfy $M\geq\norm{T^\circ x^*}$ for all $x^*\in X_0$. Then $\bigcap_k\Gamma_k$ is uniformly closed w.r.t. $\Gamma_k$ with moduli $\delta(k)=2k+1$ and $\omega(k)=\max\{4k+3,\varpi(4M(k+1)^2-1)\}$ and, further,
\[
\Gamma\supseteq\bigcap_{k}\Gamma_k.
\]
\end{lemma}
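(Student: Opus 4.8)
The plan is to prove the two assertions separately, the uniform closedness being the computational heart and the inclusion $\bigcap_k\Gamma_k\subseteq\Gamma$ a short limiting argument. For uniform closedness I would \emph{reuse the witness}: given $q\in\Gamma_{\delta(k)}=\Gamma_{2k+1}$ with a realizer $y^*\in\Gamma_{2k+1}(q)$ and a point $p\in X_0$ with $\norm{p-q}\leq\frac{1}{\omega(k)+1}$, I claim that the same $y^*$ realizes $p\in\Gamma_k$, so it suffices to re-verify the three defining conjuncts of $\Gamma_k$ for $p$ with this $y^*$.

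First, since the $H^*$-conjunct for $q$ forces $Tq\neq\emptyset$ and $\norm{p-q}\leq\frac{1}{\varpi(4M(k+1)^2-1)+1}\leq\frac{1}{\varpi(2k+1)+1}$ (using $\omega(k)\geq\varpi(4M(k+1)^2-1)$ and monotonicity of $\varpi$), the defining property of $\varpi$ gives $H^*[Tq,Tp,\frac{1}{2k+2}]$; in particular $Tp\neq\emptyset$, so $p\in\mathrm{dom}T$ and $T^\circ p$ is defined. Picking $w\in Tq$ with $\norm{y^*-w}\leq\frac{1}{2k+2}$ and then $v\in Tp$ with $\norm{w-v}\leq\frac{1}{2k+2}$ yields $\norm{y^*-v}\leq\frac{1}{k+1}$, the Hausdorff conjunct. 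For the norm conjunct I would invoke Lemma \ref{lem:Tcircmodunifcont}(3) with $x'=q$, $B=M$ and index $2k+1$: since $M(2k+1)^2+2M(2k+1)+M-1=4M(k+1)^2-1$, the hypothesis on $\norm{p-q}$ gives $\norm{T^\circ p-T^\circ q}\leq\frac{1}{2k+2}$, which together with $\vert\norm{y^*}-\norm{T^\circ q}\vert\leq\frac{1}{2k+2}$ and the reverse triangle inequality yields $\vert\norm{y^*}-\norm{T^\circ p}\vert\leq\frac{1}{k+1}$. Finally, for the approximate fixed-point conjunct, for $i\leq k\leq 2k+1$ the triangle inequality and nonexpansiveness of $J^S_{\mu_i}$ give $\norm{p-J^S_{\mu_i}(p+\mu_i y^*)}\leq 2\norm{p-q}+\norm{q-J^S_{\mu_i}(q+\mu_i y^*)}\leq 2\norm{p-q}+\frac{1}{2k+2}\leq\frac{1}{k+1}$, where the last step uses $\omega(k)\geq 4k+3$, i.e. $\norm{p-q}\leq\frac{1}{4(k+1)}$. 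Thus $p\in\Gamma_k$.

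For the inclusion, fix $x^*\in\bigcap_k\Gamma_k$ and for each $k$ choose $y_k\in\Gamma_k(x^*)$ and $w_k\in Tx^*$ with $\norm{y_k-w_k}\leq\frac{1}{k+1}$. Then $0\leq\norm{w_k}-\norm{T^\circ x^*}\leq\frac{2}{k+1}$, and as the norms are bounded this gives $\norm{w_k}^2-\norm{T^\circ x^*}^2\to 0$, so Lemma \ref{lem:Tcircmodunifcont}(2) yields $w_k\to T^\circ x^*$ and hence $y_k\to T^\circ x^*$. Fixing any index $i$ and letting $k\to\infty$ through $k\geq i$, the fixed-point conjunct gives $\norm{x^*-J^S_{\mu_i}(x^*+\mu_i y_k)}\to 0$ while nonexpansiveness (hence continuity) of $J^S_{\mu_i}$ gives $J^S_{\mu_i}(x^*+\mu_i y_k)\to J^S_{\mu_i}(x^*+\mu_i T^\circ x^*)$; therefore $x^*=J^S_{\mu_i}(x^*+\mu_i T^\circ x^*)$. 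By the defining relation of the resolvent this means $T^\circ x^*\in S(x^*)$, and since $T^\circ x^*\in T(x^*)$ we conclude $T^\circ x^*\in T(x^*)\cap S(x^*)$, i.e. $x^*\in\Gamma$.

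The main obstacle is the constant bookkeeping in the uniform closedness part, specifically checking that the quadratic modulus of Lemma \ref{lem:Tcircmodunifcont}(3) dovetails exactly with the index shift $k\mapsto 2k+1$ — this is precisely what produces both the argument $4M(k+1)^2-1$ of $\varpi$ and the choice $\delta(k)=2k+1$ — and confirming that the strongest of the three demands on $\norm{p-q}$ (the $T^\circ$-continuity one and the factor-$2$ resolvent one) is captured by the stated $\omega(k)=\max\{4k+3,\varpi(4M(k+1)^2-1)\}$. Once these are in place the rest is routine triangle inequalities, and the inclusion reduces to the observation $y_k\to T^\circ x^*$ together with continuity of the resolvent.
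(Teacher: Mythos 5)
Your proposal is correct and follows essentially the same route as the paper's proof: reusing the witness $y^*$ to verify the three conjuncts of $\Gamma_k$ for $p$ (with the same index arithmetic showing $\varpi'(2k+1)=\varpi(4M(k+1)^2-1)$ via Lemma \ref{lem:Tcircmodunifcont}(3)), and obtaining the inclusion by the limiting argument $y_k\to T^\circ x^*$ through Lemma \ref{lem:Tcircmodunifcont}(2) together with continuity of the resolvent. The only (harmless) differences are cosmetic: you explicitly note that $Tp\neq\emptyset$ and run the fixed-point limit for arbitrary $i$, whereas the paper only uses $i=0$.
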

\begin{proof}
Suppose $q\in\Gamma_{\delta(k)}$ and let $a\in\Gamma_k(q)$, i.e. $\vert\norm{a}-\norm{T^\circ q}\vert\leq\frac{1}{\delta(k)+1}$, $H^*[a,Tq,1/(\delta(k)+1)]$, and $\norm{q-J^S_{\mu_i}(q+\mu_i a)}\leq\frac{1}{\delta(k)+1}$ for all $i\leq\delta(k)$.

Let $p\in X_0$ be such that $\norm{p-q}\leq\frac{1}{\omega(k)+1}$. Then in particular $\omega(k)\leq\varpi(2k+1)$ as $\varpi$ is monotone increasing and therefore
\[
\norm{p-q}\leq\frac{1}{\omega(k)+1}\leq\frac{1}{\varpi(2k+1)+1}
\]
and therefore
\[
H^*\left[a,Tp,\frac{1}{\delta(k)+1}+\frac{1}{2(k+1)}\right],\text{ and so }H^*\left[a,Tp,\frac{1}{k+1}\right].
\]
Further, as $\delta(k)\geq k$, we have
\begin{align*}
\norm{p-J^S_{\mu_i}(p+\mu_i a)}&\leq\norm{p-q}+\norm{q-J^S_{\mu_i}(q+\mu_i a)}+\norm{J^S_{\mu_i}(q+\mu_i a)-J^S_{\mu_i}(p+\mu_i a)}\\
&\leq 2\norm{p-q}+\norm{q-J^S_{\mu_i}(q+\mu_i a)}\\
&\leq \frac{2}{\omega(k)+1}+\frac{1}{\delta(k)+1}\\
&\leq \frac{2}{4(k+1)}+\frac{1}{2(k+1)}\\
&=\frac{1}{k+1}
\end{align*}
for all $i\leq k$ using nonexpansivity of $J^S_{\mu_i}$.

Lastly, we have
\begin{align*}
\vert\norm{a}-\norm{T^\circ p}\vert&\leq\vert\norm{a}-\norm{T^\circ q}\vert+\vert\norm{T^\circ p}-\norm{T^\circ q}\vert\\
&\leq\frac{1}{\delta(k)+1}+\norm{T^\circ p-T^\circ q}\\
&\leq\frac{1}{2(k+1)}+\frac{1}{2(k+1)}\\
&=\frac{1}{k+1}
\end{align*}
using the modulus of uniform continuity for $T^\circ$ derived from $\varpi$ in Proposition \ref{lem:Tcircmodunifcont} (where one has to note, in particular, the definition of $M$). Combined, we have $a\in\Gamma_k(p)$ and thus $p\in\Gamma_k$ and $\Gamma$ is therefore uniformly closed w.r.t. $\Gamma_k$ (over $X_0$).\\

For the second claim, let $x^*\in\bigcap_k\Gamma_k$, i.e. for any $k$ there are $y^*_k\in\Gamma_k(x^*)$ such that $\vert\norm{y^*_k}-\norm{T^\circ x^*}\vert\leq\frac{1}{k+1}$, $H^*[y^*_k,Tx^*,1/(k+1)]$ and $\norm{x^*-J^S_{\mu_i}(x^*+\mu_i y^*_k)}\leq\frac{1}{k+1}$ for all $i\leq k$.

Now, item (1) yields $\norm{y^*_k}\to\norm{T^\circ x^*}$ for $k\to\infty$. Item (2) yields
\[
\forall k\exists z^*_k\in Tx^*\left(\norm{y^*_k-z^*_k}\leq\frac{1}{k+1}\right)
\]
and thus in particular $\vert\norm{z^*_k}-\norm{y^*_k}\vert\leq\frac{1}{k+1}$. Thus $\norm{z^*_k}\to\norm{T^\circ x^*}$ for $k\to\infty$. Uniqueness of $T^\circ x^*$ as the element of minimal norm, Lemma \ref{lem:Tcircmodunifcont}, (2), now yields $z^*_k\to T^\circ x^*$ for $k\to\infty$ (actually in a quantitative way). Thus, in particular also $y^*_k\to T^\circ x^*$ for $k\to\infty$. This yields
\[
\norm{x^*-J^S_{\mu_0}(x^*+\mu_0y^*_k)}\to\norm{x^*-J^S_{\mu_0}(x^*+\mu_0 T^\circ x^*)}
\]
for $k\to\infty$ while item (3) yields $\norm{x^*-J^S_{\mu_0}(x^*+\mu_0y^*_k)}\to 0$. Thus
\[
\norm{x^*-J^S_{\mu_0}(x^*+\mu_0 T^\circ x^*)}=0
\]
and therefore $x^*=J^S_{\mu_0}(x^*+\mu_0 T^\circ x^*)$, i.e. 
\[
T^\circ x^*=\mu_0^{-1}(x^*+\mu_0 T^\circ x^* - x^*)\in Sx^*.
\]
Now, as also $T^\circ x^*\in Tx^*$, we have $Sx^*\cap Tx^*\neq\emptyset$, i.e. $x^*\in\Gamma$.
\end{proof}

The next lemma gives a preliminary result for the extraction of a modulus of uniform quasi-Fej\'er monotonicity, obtained by an extraction from the proof of quasi-Fej\'er monotonicity given by Moudafi in \cite{Mou2015}.
\begin{lemma}\label{lem:qfejerinequality}
Let $n,r\in\mathbb{N}$ and $l\in\mathbb{N}^*$ be given and let $x^*\in X_0$ and $y^*$ be such that
\[
\norm{x^*-J^S_{\mu_{n+k}}(x^*+\mu_{n+k}y^*)}\leq\frac{1}{r+1}
\]
for all $k\in [0;l-1]$. Then
\begin{align*}
\norm{x_{n+l}-x^*}&\leq\prod_{k=0}^{l-1}\left(1+\frac{\mu_{n+k}}{\lambda_{n+k}}\right)\norm{x_n-x^*}\\
&\qquad+(\norm{T^\circ x^*}+\norm{y^*})\sum_{k=0}^{l-1}\mu_{n+k}\prod_{j=k+1}^{l-1}\left(1+\frac{\mu_{n+j}}{\lambda_{n+j}}\right)\\
&\qquad+\sum_{k=1}^l\frac{1}{r+1}\prod_{j=k}^{l-1}\left(1+\frac{\mu_{n+j}}{\lambda_{n+j}}\right).
\end{align*}
\end{lemma}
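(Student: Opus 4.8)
The plan is to first establish the single-step version of the claimed inequality, namely, for any index $m$ satisfying $\norm{x^*-J^S_{\mu_m}(x^*+\mu_m y^*)}\leq\frac{1}{r+1}$,
\[
\norm{x_{m+1}-x^*}\leq\left(1+\frac{\mu_m}{\lambda_m}\right)\norm{x_m-x^*}+\mu_m(\norm{T^\circ x^*}+\norm{y^*})+\frac{1}{r+1},
\]
and then to obtain the general statement by iterating this estimate over $m=n,n+1,\dots,n+l-1$ via the standard unrolling of a one-dimensional affine recurrence.

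For the one-step estimate, I would start from the defining iteration $x_{m+1}=J^S_{\mu_m}(x_m+\mu_m T_{\lambda_m}x_m)$ and insert the approximate fixed point $J^S_{\mu_m}(x^*+\mu_m y^*)$. Using the triangle inequality, the nonexpansivity of the resolvent $J^S_{\mu_m}$, and the hypothesis $\norm{x^*-J^S_{\mu_m}(x^*+\mu_m y^*)}\leq\frac{1}{r+1}$, this gives
\[
\norm{x_{m+1}-x^*}\leq\norm{(x_m-x^*)+\mu_m(T_{\lambda_m}x_m-y^*)}+\frac{1}{r+1}.
\]
Then I would split $T_{\lambda_m}x_m-y^*=(T_{\lambda_m}x_m-T_{\lambda_m}x^*)+(T_{\lambda_m}x^*-y^*)$ and bound the two parts separately. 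For the first, the crucial point is that $\lambda_m T_{\lambda_m}=Id-J^T_{\lambda_m}$ is (firmly) nonexpansive, so $\norm{T_{\lambda_m}x_m-T_{\lambda_m}x^*}\leq\frac{1}{\lambda_m}\norm{x_m-x^*}$, which produces exactly the coefficient $1+\mu_m/\lambda_m$ in front of $\norm{x_m-x^*}$. For the second, I would use the triangle inequality together with the standard norm bound for the Yosida approximation $\norm{T_{\lambda_m}x^*}\leq\norm{T^\circ x^*}$ (valid since $x^*\in\mathrm{dom}T$, as already implicit in the occurrence of $T^\circ x^*$; cf. \cite{BC2017}) to get $\norm{T_{\lambda_m}x^*-y^*}\leq\norm{T^\circ x^*}+\norm{y^*}$. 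Assembling these yields the displayed one-step estimate.

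To conclude, I would set $a_k:=\norm{x_{n+k}-x^*}$ and $c_k:=1+\mu_{n+k}/\lambda_{n+k}$, and observe that the hypothesis supplies the approximate fixed point condition at each index $n+k$ for $k\in[0;l-1]$, so the one-step estimate gives $a_{k+1}\leq c_k a_k+\mu_{n+k}(\norm{T^\circ x^*}+\norm{y^*})+\frac{1}{r+1}$. Unrolling this affine recurrence over $k=0,\dots,l-1$ produces
\[
a_l\leq\Big(\prod_{k=0}^{l-1}c_k\Big)a_0+(\norm{T^\circ x^*}+\norm{y^*})\sum_{k=0}^{l-1}\mu_{n+k}\prod_{j=k+1}^{l-1}c_j+\frac{1}{r+1}\sum_{k=0}^{l-1}\prod_{j=k+1}^{l-1}c_j,
\]
and the reindexing $k\mapsto k+1$ in the last sum turns it into $\sum_{k=1}^{l}\prod_{j=k}^{l-1}c_j$, matching the claimed form verbatim.

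The computational core is entirely routine; the only genuinely delicate step is the one-step estimate, and within it the recognition that the factor $1+\mu_m/\lambda_m$ must come from the nonexpansivity of $Id-J^T_{\lambda_m}$ (equivalently the $\frac{1}{\lambda_m}$-Lipschitz bound on the Yosida approximation) applied to the difference $T_{\lambda_m}x_m-T_{\lambda_m}x^*$, rather than from a naive triangle-inequality split of $\mu_m(T_{\lambda_m}x_m-y^*)$, which would not reproduce the correct coefficient. The appearance of $\norm{T^\circ x^*}$ (as opposed to $\norm{T^\circ x_m}$ or $\norm{T_{\lambda_m}x_m}$) likewise hinges on first separating off $T_{\lambda_m}x^*$ and only then invoking the Yosida norm bound at the fixed point $x^*$.
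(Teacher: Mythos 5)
Your proposal is correct and follows essentially the same route as the paper: insert the approximate fixed point $J^S_{\mu_n}(x^*+\mu_n y^*)$, use nonexpansivity of the resolvent, split $T_{\lambda_n}x_n-y^*$ through $T_{\lambda_n}x^*$, invoke the $\lambda_n^{-1}$-Lipschitz continuity of the Yosida approximation and the bound $\norm{T_{\lambda_n}x^*}\leq\norm{T^\circ x^*}$, and then iterate the resulting one-step affine recurrence (the paper phrases the last step as induction on $l$). The reindexing of the error sum at the end is also carried out correctly.
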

\begin{proof}
Given $n$ and $r$ as well as $x^*$ and $y^*$, we get
\begin{align*}
\norm{x_{n+1}-x^*}&=\norm{J^S_{\mu_n}(x_n+\mu_nT_{\lambda_n}x_n)-x^*}\\
&\leq\norm{J^S_{\mu_n}(x_n+\mu_nT_{\lambda_n}x_n)-J^S_{\mu_n}(x^*+\mu_ny^*)}+\norm{J^S_{\mu_n}(x^*+\mu_ny^*)-x^*}\\
&\leq\norm{x_n+\mu_nT_{\lambda_n}x_n-x^*-\mu_ny^*}+\frac{1}{r+1}\\
&\leq\norm{x_n-x^*}+\mu_n\norm{T_{\lambda_n}x_n-T_{\lambda_n}x^*}+\mu_n\norm{T_{\lambda_n}x^*-y^*}+\frac{1}{r+1}\\
&\leq\norm{x_n-x^*}+\frac{\mu_n}{\lambda_n}\norm{x_n-x^*}+\mu_n(\norm{T^\circ x^*}+\norm{y^*})+\frac{1}{r+1}\\
&=\left(1+\frac{\mu_n}{\lambda_n}\right)\norm{x_n-x^*}+\mu_n(\norm{T^\circ x^*}+\norm{y^*})+\frac{1}{r+1}.
\end{align*}
where we have used $\lambda_n^{-1}$-Lipschitz continuity of $T_{\lambda_n}$ (Corollary 23.11 in \cite{BC2017}). This generalizes to the claim by induction on $l$.
\end{proof}
This immediately gives a modulus of uniform quasi-Fej\'er monotonicity when we assume certain bounds on the objects involved. For this, we define a \emph{bounded subtraction} $\remin$ by $n\remin m:=\max\{0,n-m\}$.

\begin{lemma}\label{lem:quasiFejerModulus}
Let $M\geq \norm{T^\circ x^*}$ for any $x^*\in X_0$. Further, let $A\geq\sum_{n=0}^\infty\frac{\mu_n}{\lambda_n}$ and assume that $\lambda_n\leq B$ for all $n$. Then $\sum_{n=0}^\infty\mu_n<\infty$ and $(x_n)$ is uniformly quasi-$(e^A\mathrm{id}_{\mathbb{R}_+},\mathrm{id}_{\mathbb{R}_+})$-Fej\'er monotone w.r.t. $\Gamma_k$ with modulus $\chi$, that is for all $r,n,m\in\mathbb{N}$:
\[
\forall x^*\in\Gamma_k\forall l\leq m\left(\norm{x_{n+l}-x^*}<e^A\norm{x_n-x^*}+(2M+1)e^A\sum_{i=n}^{n+l-1}\mu_i+\frac{1}{r+1}\right)
\]
where
\[
k=\chi(r,n,m):=\max\{n+m\remin 1,\ceil{(r+1)\cdot m\cdot e^A}\}.
\]
\end{lemma}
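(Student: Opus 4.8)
The plan is to derive the uniform quasi-Fej\'er inequality directly from Lemma~\ref{lem:qfejerinequality} by bounding each of the three summands that appear there, using the hypotheses $M\geq\norm{T^\circ x^*}$, $A\geq\sum_n\mu_n/\lambda_n$ and $\lambda_n\leq B$. First I would record the auxiliary summability claim $\sum_n\mu_n<\infty$: since $\mu_n=\lambda_n\cdot(\mu_n/\lambda_n)\leq B\cdot(\mu_n/\lambda_n)$, the assumption $A\geq\sum_n\mu_n/\lambda_n<\infty$ gives $\sum_n\mu_n\leq BA<\infty$, so the sequence $(\varepsilon_i)$ with $\varepsilon_i=(2M+1)e^A\mu_i$ is genuinely summable, as required by the definition of quasi-Fej\'er monotonicity.

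The heart of the argument is the elementary product estimate $\prod_{j\in I}(1+\mu_{n+j}/\lambda_{n+j})\leq\exp\!\big(\sum_{j\in I}\mu_{n+j}/\lambda_{n+j}\big)\leq e^A$, valid for any index set $I$ because $1+t\leq e^t$ and the partial sums are bounded by $A$. Applying this to the first summand of Lemma~\ref{lem:qfejerinequality} immediately yields the leading term $e^A\norm{x_n-x^*}$. For the second summand I would use $\norm{T^\circ x^*}+\norm{y^*}\leq 2M+1$: indeed $\norm{T^\circ x^*}\leq M$, and for $x^*\in\Gamma_k$ with realizer $y^*$ one has $\vert\norm{y^*}-\norm{T^\circ x^*}\vert\leq 1/(k+1)\leq 1$, whence $\norm{y^*}\leq M+1$; bounding the remaining product factor by $e^A$ gives the term $(2M+1)e^A\sum_{i=n}^{n+l-1}\mu_i$.

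The only genuinely quantitative point is the third summand $\sum_{k=1}^{l}\frac{1}{r+1}\prod_{j=k}^{l-1}(1+\mu_{n+j}/\lambda_{n+j})$, which I would bound crudely by $\frac{l\,e^A}{r+1}\leq\frac{m\,e^A}{r+1}$ after replacing each product by $e^A$ and each $l$ by $m$. To absorb this into the final error $1/(r+1)$ one must sharpen the hypothesis on $x^*$: the choice $k=\chi(r,n,m)\geq\ceil{(r+1)\cdot m\cdot e^A}$ guarantees that the approximate fixed-point bound in the definition of $\Gamma_k$ holds with $1/(k+1)\leq 1/\big((r+1)me^A\big)$ rather than $1/(r+1)$, so that Lemma~\ref{lem:qfejerinequality} may be invoked with this smaller tolerance in place of $1/(r+1)$; the third summand is then at most $\tfrac{me^A}{(r+1)me^A}=\tfrac{1}{r+1}$. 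The clause $k\geq n+m\remin 1$ in $\chi$ serves the separate purpose of ensuring $i\leq k$ for all indices $i\in[n;n+l-1]$ with $l\leq m$, so that the fixed-point condition in $\Gamma_k$ is actually available at every resolvent parameter $\mu_{n+\,\cdot}$ occurring in Lemma~\ref{lem:qfejerinequality}.

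I expect the main obstacle to be purely bookkeeping: namely checking that the index $i$ ranges appearing in $\Gamma_k$ (which constrains $\norm{x^*-J^S_{\mu_i}(x^*+\mu_i y^*)}$ only for $i\leq k$) cover precisely the shifted indices $\mu_{n+k}$, $k\in[0;l-1]$, needed to apply Lemma~\ref{lem:qfejerinequality}; this is exactly what the $n+m\remin 1$ component of $\chi$ secures. Apart from this alignment of index sets, the verification is a routine substitution of the three displayed bounds into Lemma~\ref{lem:qfejerinequality}, together with the strictness of the final inequality, which is obtained by noting that at least one of the bounds may be taken strict.
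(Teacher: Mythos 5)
Your proposal is correct and follows essentially the same route as the paper's proof: derive $\sum_n\mu_n\leq BA<\infty$ from $\lambda_n\leq B$, bound all products by $e^A$ via $1+t\leq e^t$, use $\vert\norm{y^*}-\norm{T^\circ x^*}\vert\leq 1$ to get $\norm{T^\circ x^*}+\norm{y^*}\leq 2M+1$, invoke Lemma~\ref{lem:qfejerinequality} with the sharpened tolerance $1/(\ceil{(r+1)me^A}+1)$ supplied by the second component of $\chi$, and use the $n+m\remin 1$ component to make the fixed-point condition in $\Gamma_k$ available for all indices $i\in[n;n+l-1]$. The only cosmetic difference is that the paper makes the strictness explicit through $1/(\ceil{(r+1)me^A}+1)<1/((r+1)me^A)$ and reduces to $m\geq 1$ at the outset, both of which your sketch covers implicitly.
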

\begin{proof}
Assume $m\geq 1$ without loss of generality. First, $\lambda_n\leq B$ together with
\[
\sum_{n=0}^\infty\frac{\mu_n}{\lambda_n}\leq A<\infty
\]
gives $\sum_{n=0}^\infty\mu_n<\infty$. Let $x^*\in\Gamma_k$ be arbitrary and $y^*\in\Gamma_k(x^*)$. Then
\[
\norm{x^*-J_{\mu_i}^S(x^*+\mu_iy^*)}\leq\frac{1}{k+1}\leq\frac{1}{\ceil{(r+1)\cdot m\cdot e^A}+1}
\]
for all $i\leq n+m-1$, since $k\geq n+m\remin 1$ and $m\geq 1$, and thus in particular for all $i\in[n;n+l-1]$ and any $l\leq m$. By the previous Lemma \ref{lem:qfejerinequality}, using that $\vert\norm{y^*}-\norm{T^\circ x^*}\vert\leq 1$, i.e. $\norm{y^*}\leq\norm{T^\circ x^*}+1\leq M+1$, we get 
\begin{align*}
\norm{x_{n+l}-x^*}
&\leq e^A\norm{x_n-x^*}+(2M+1)e^A\sum_{i=n}^{n+l-1}\mu_i+e^A\sum_{k=1}^l\frac{1}{\ceil{(r+1)\cdot m\cdot e^A}+1}\\
&< e^A\norm{x_n-x^*}+(2M+1)e^A\sum_{i=n}^{n+l-1}\mu_i+e^A\frac{l}{(r+1)\cdot m\cdot e^A}\\
&\leq e^A\norm{x_n-x^*}+(2M+1)e^A\sum_{i=n}^{n+l-1}\mu_i+\frac{1}{r+1}.
\end{align*}
\end{proof}
We move on to the lim-inf-property. For that, we first show a general inequality in the spirit of the proximal point algorithm (see \cite{BC2017}) which requires the following result:
\begin{lemma}[\cite{BC2017}, Proposition 23.31, (i)]\label{lem:fundEqResolvents}
Let $A$ be maximally monotone, $\gamma,\lambda>0$ and $x$ a point. Then
\[
J^A_\gamma x=J^A_{\lambda\gamma}(\lambda x+ (1-\lambda)J^A_\gamma x).
\]
\end{lemma}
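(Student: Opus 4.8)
The final statement to prove is Lemma~\ref{lem:fundEqResolvents}, the resolvent identity
\[
J^A_\gamma x=J^A_{\lambda\gamma}\big(\lambda x+ (1-\lambda)J^A_\gamma x\big),
\]
for $A$ maximally monotone and $\gamma,\lambda>0$. Since this is cited as Proposition 23.31(i) from \cite{BC2017}, the cleanest route is simply to invoke that reference; but to give an actual proof, the plan is to work directly from the definition of the resolvent as the inverse of $\mathrm{Id}+\gamma A$, i.e. to verify the defining inclusion for the right-hand side and appeal to the fact that resolvents of maximally monotone operators are single-valued with full domain (Minty's theorem), so that satisfying the inclusion uniquely pins down the resolvent value.

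The plan is as follows. First I would abbreviate $u:=J^A_\gamma x$, so that by definition of the resolvent we have $x\in u+\gamma A u$, equivalently
\[
\frac{x-u}{\gamma}\in Au.
\]
The goal is to show that $u=J^A_{\lambda\gamma}(w)$ where $w:=\lambda x+(1-\lambda)u$. By single-valuedness of the resolvent (which holds since $A$ is maximally monotone, so $\mathrm{Id}+\lambda\gamma A$ is a bijection onto $X$ by Minty), it suffices to verify the membership $w\in u+\lambda\gamma A u$, i.e. that $(w-u)/(\lambda\gamma)\in Au$. So the second step is the purely algebraic computation
\[
\frac{w-u}{\lambda\gamma}=\frac{\lambda x+(1-\lambda)u-u}{\lambda\gamma}=\frac{\lambda x-\lambda u}{\lambda\gamma}=\frac{x-u}{\gamma},
\]
which lies in $Au$ by the first step. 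Hence $u$ satisfies the inclusion defining $J^A_{\lambda\gamma}(w)$, and by uniqueness $u=J^A_{\lambda\gamma}(w)$, which is exactly the claim.

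The only genuine content beyond this one-line algebraic identity is the appeal to well-definedness and single-valuedness of the resolvents, both of which rest on Minty's theorem guaranteeing that $\mathrm{Id}+\gamma A$ and $\mathrm{Id}+\lambda\gamma A$ are bijective for a maximally monotone $A$ and any positive scaling; these are exactly the standard facts underlying the resolvent calculus in \cite{BC2017}. I do not expect any real obstacle here: the crucial cancellation is that the factor $\lambda$ in the numerator $\lambda(x-u)$ is precisely cancelled by the $\lambda$ in the stepsize $\lambda\gamma$, recovering the original subdifferential-type relation $(x-u)/\gamma\in Au$. Thus the identity is essentially a reparametrization statement, and the honest thing for this paper is to cite \cite{BC2017}, Proposition 23.31(i), as stated.
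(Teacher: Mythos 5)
Your proof is correct: the paper itself offers no proof of this lemma, citing it directly as Proposition 23.31(i) of \cite{BC2017}, and your argument---rewriting $w-u=\lambda(x-u)$ so that $(w-u)/(\lambda\gamma)=(x-u)/\gamma\in Au$ and invoking single-valuedness of the resolvent via Minty's theorem---is exactly the standard verification behind that reference. Nothing is missing.
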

\begin{lemma}\label{lem:approxerrorbound}
For any $n,i\in\mathbb{N}$, we have
\[
\norm{x_n-J^S_{\mu_i}(x_n+\mu_i T_{\lambda_n}x_n)}\leq\norm{x_n-x_{n+1}}+\vert\mu_n-\mu_i\vert\frac{\norm{x_n-x_{n+1}}}{\mu_n}.
\]
\end{lemma}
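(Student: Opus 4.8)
The plan is to compare the quantity $w := J^S_{\mu_i}(x_n + \mu_i T_{\lambda_n}x_n)$, whose distance from $x_n$ we wish to bound, against the genuine iterate $x_{n+1} = J^S_{\mu_n}(x_n + \mu_n T_{\lambda_n}x_n)$, and then to control $\norm{x_{n+1} - w}$ by means of the fundamental resolvent identity from Lemma \ref{lem:fundEqResolvents}. Writing $v := T_{\lambda_n}x_n$ for brevity, both points are resolvents of $S$ applied to arguments that differ only through the parameter, so the idea is to use the identity to rewrite $x_{n+1}$ as a resolvent $J^S_{\mu_i}$ of a nearby point, after which nonexpansivity of $J^S_{\mu_i}$ does the rest.

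Concretely, I would apply Lemma \ref{lem:fundEqResolvents} with $A = S$, $\gamma = \mu_n$, $\lambda = \mu_i/\mu_n > 0$ and $x = x_n + \mu_n v$, so that $\lambda\gamma = \mu_i$ and $J^S_{\mu_n}x = x_{n+1}$. This yields
\[
x_{n+1} = J^S_{\mu_i}\left(\tfrac{\mu_i}{\mu_n}(x_n + \mu_n v) + \left(1 - \tfrac{\mu_i}{\mu_n}\right)x_{n+1}\right).
\]
Since $w = J^S_{\mu_i}(x_n + \mu_i v)$ is the resolvent $J^S_{\mu_i}$ of a different point, nonexpansivity of $J^S_{\mu_i}$ bounds $\norm{x_{n+1} - w}$ by the norm of the difference of the two arguments. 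The key computation is then purely algebraic: subtracting $x_n + \mu_i v$ from $\tfrac{\mu_i}{\mu_n}(x_n + \mu_n v) + (1 - \tfrac{\mu_i}{\mu_n})x_{n+1}$, the $\mu_i v$ terms cancel and the remainder collapses to $\tfrac{\mu_n - \mu_i}{\mu_n}(x_{n+1} - x_n)$. Hence
\[
\norm{x_{n+1} - w} \leq \frac{\vert\mu_n - \mu_i\vert}{\mu_n}\norm{x_n - x_{n+1}}.
\]

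The claim then follows from a single application of the triangle inequality $\norm{x_n - w} \leq \norm{x_n - x_{n+1}} + \norm{x_{n+1} - w}$, inserting the bound just derived. I expect the main obstacle to be essentially bookkeeping: choosing the parameter assignment in Lemma \ref{lem:fundEqResolvents} in the right direction. Taking $\gamma = \mu_n$ and $\lambda = \mu_i/\mu_n$, rather than the reverse, is precisely what produces the coefficient $1/\mu_n$ demanded by the statement instead of $1/\mu_i$, and it avoids the self-referential inequality in $\norm{x_n - w}$ that the other choice would create. No positivity or size restriction on $\mu_i/\mu_n$ is needed, since the identity holds for every $\lambda > 0$, and the degenerate case $\mu_i = \mu_n$ reduces to the trivial bound $\norm{x_n - x_{n+1}} \leq \norm{x_n - x_{n+1}}$.
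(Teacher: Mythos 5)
Your proposal is correct and matches the paper's own proof essentially step for step: the same triangle inequality against $x_{n+1}$, the same application of Lemma \ref{lem:fundEqResolvents} with $\gamma=\mu_n$ and $\lambda=\mu_i/\mu_n$ to rewrite $x_{n+1}$ as a $J^S_{\mu_i}$-resolvent, followed by nonexpansivity and the same algebraic cancellation yielding the factor $\vert\mu_n-\mu_i\vert/\mu_n$. No issues.
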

\begin{proof}
First, we have
\begin{align*}
&\norm{x_n-J^S_{\mu_i}(x_n+\mu_iT_{\lambda_n}x_n)}\\
&\qquad\qquad\qquad\leq\norm{x_n-x_{n+1}}+\norm{J^S_{\mu_n}(x_n+\mu_nT_{\lambda_n}x_n)-J^S_{\mu_i}(x_n+\mu_iT_{\lambda_n}x_n)}\\
&\qquad\qquad\qquad\leq\norm{x_n-x_{n+1}}+\norm{J^S_{\mu_n}x^n_n-J^S_{\mu_i}x	^i_n}
\end{align*}
where we write $x_n^j = x_n + \mu_jT_{\lambda_n}x_n$ for any $j$ for simplicity. Then, we have
\begin{align*}
&\norm{ J^S_{\mu_n}x_n^n-J^S_{\mu_i}x_n^i}\\
&\quad=\norm{J^S_{\mu_i}\left(\frac{\mu_i}{\mu_n}x_n^n+\left(1-\frac{\mu_i}{\mu_n}\right)J^S_{\mu_n}x_n^n\right)-J^S_{\mu_i}x_n^i} \quad\text{(by Lemma \ref{lem:fundEqResolvents})}\\
&\quad\leq\norm{\frac{\mu_i}{\mu_n}x_n^n+\left(1-\frac{\mu_i}{\mu_n}\right)J^S_{\mu_n}x_n^n-x_n^i} \quad\text{(nonexpansivity of }J^S_{\mu_i})\\
&\quad=\norm{\frac{\mu_i}{\mu_n}(x_n+\mu_nT_{\lambda_n}x_n)+\left(1-\frac{\mu_i}{\mu_n}\right)x_{n+1}-(x_n+\mu_iT_{\lambda_n}x_n)}\\
&\quad=\norm{\frac{\mu_i}{\mu_n}x_n-x_n+\left(1-\frac{\mu_i}{\mu_n}\right)x_{n+1}}\\
&\quad=\vert\mu_n-\mu_i\vert\frac{\norm{x_n-x_{n+1}}}{\mu_n}.
\end{align*}
Combined, we have
\begin{align*}
\norm{x_n-J^S_{\mu_i}(x_n+\mu_iT_{\lambda_n}x_n) }\leq\norm{x_n-x_{n+1}}+\vert\mu_n-\mu_i\vert\frac{\norm{x_n-x_{n+1}}}{\mu_n}.
\end{align*}
\end{proof}
Under suitable quantitative reformulations of the assumptions, we get the following result translating a quantitative version of the assumption (3) of Theorem \ref{thm:MoudafiConv} into a $\liminf$-bound.
\begin{lemma}\label{lem:liminfbound}
Let $C\geq 1$ be an upper bound on both $\mathrm{diam}(\mu_n)$ and $(\mu_n)$ and let $M\in\mathbb{N}^*$ be such that $M\geq\norm{T^\circ x^*}$ for any $x^*\in X_0$. Further, let $\phi$ be s.t.
\[
\forall k,n\exists N\in [n;\phi(k,n)]\left(\norm{x_N-x_{N+1}}/\mu_N<\frac{1}{k+1}\right)
\]
and such that it is monotone w.r.t. $k$ and $n$. Let $\theta$ be a rate of convergence for $\lambda_n\to 0$, i.e.
\[
\forall k\forall n\geq\theta(k)\left(\lambda_n\leq\frac{1}{k+1}\right).
\]
Further, let $\varpi$ be a modulus of uniform continuity for $T$ w.r.t. $H^*$. Then the function
\[
\Phi(k,n)=\phi\left(\left\lceil 2C(k+1)\right\rceil-1, \max\{\theta(M\varpi(k)+M-1),n\}\right)
\]
is a $\liminf$-bound for $x_n$ w.r.t. $\Gamma_k$.
\end{lemma}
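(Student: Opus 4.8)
The plan is to exhibit, for given $k,n\in\mathbb{N}$, an index $N\in[n;\Phi(k,n)]$ together with a witness $y^*$ certifying $x_N\in\Gamma_k$. The natural candidate for the witness is the Yosida approximate $y^*:=T_{\lambda_N}x_N$ at the chosen iterate, mirroring the structure of the algorithm $x_{N+1}=J^S_{\mu_N}(x_N+\mu_NT_{\lambda_N}x_N)$. First I would select $N$: writing $k':=\ceil{2C(k+1)}-1$ and $n':=\max\{\theta(M\varpi(k)+M-1),n\}$, the defining property of $\phi$ produces an $N\in[n';\phi(k',n')]=[n';\Phi(k,n)]$ with $\norm{x_N-x_{N+1}}/\mu_N<1/(k'+1)$. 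Since $n'\geq n$ this $N$ lies in $[n;\Phi(k,n)]$, and since $N\geq n'\geq\theta(M\varpi(k)+M-1)$ the rate $\theta$ gives $\lambda_N\leq 1/(M\varpi(k)+M)$.

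Next I would verify the three clauses defining $\Gamma_k$ for $x^*=x_N$, $y^*=T_{\lambda_N}x_N$. The key elementary fact is the standard Yosida estimate $\norm{T_{\lambda_N}x_N}\leq\norm{T^\circ x_N}\leq M$, which follows from $T_{\lambda_N}x_N\in T(J^T_{\lambda_N}x_N)$, monotonicity of $T$ applied to the pairs $(J^T_{\lambda_N}x_N,T_{\lambda_N}x_N)$ and $(x_N,T^\circ x_N)$, and Cauchy--Schwarz. This controls the resolvent displacement: $\norm{x_N-J^T_{\lambda_N}x_N}=\lambda_N\norm{T_{\lambda_N}x_N}\leq\lambda_N M\leq 1/(\varpi(k)+1)$, so the modulus of uniform continuity $\varpi$ yields $H^*[T(J^T_{\lambda_N}x_N),Tx_N,1/(k+1)]$; as $y^*\in T(J^T_{\lambda_N}x_N)$, this already gives the Hausdorff-like clause $H^*[y^*,Tx_N,1/(k+1)]$. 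The norm clause then follows for free: in one direction $\norm{y^*}\leq\norm{T^\circ x_N}$ by the Yosida bound, and in the other the point $q\in Tx_N$ furnished by $H^*$ satisfies $\norm{T^\circ x_N}\leq\norm{q}\leq\norm{y^*}+1/(k+1)$ by minimality of $T^\circ x_N$, whence $\vert\norm{y^*}-\norm{T^\circ x_N}\vert\leq 1/(k+1)$.

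For the approximate fixed-point clause I would invoke Lemma \ref{lem:approxerrorbound}, which bounds $\norm{x_N-J^S_{\mu_i}(x_N+\mu_iT_{\lambda_N}x_N)}$ by $\norm{x_N-x_{N+1}}+\vert\mu_N-\mu_i\vert\norm{x_N-x_{N+1}}/\mu_N$. Using $\mu_N\leq C$ and $\vert\mu_N-\mu_i\vert\leq C$ together with $\norm{x_N-x_{N+1}}/\mu_N<1/(k'+1)$, both summands are $<C/(k'+1)$, so the whole expression is $<2C/(k'+1)\leq 1/(k+1)$ by the choice $k'+1=\ceil{2C(k+1)}\geq 2C(k+1)$, uniformly for all $i\leq k$. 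The three clauses together give $x_N\in\Gamma_k$. Finally I would record that $\Phi$ is monotone in $k$ and $n$, which follows from monotonicity of $\phi$, of $\varpi$, of $\theta$ (w.l.o.g.), and of $\max\{\cdot,n\}$, so that $\Phi$ is indeed a $\liminf$-bound. I expect the main subtlety to be the bookkeeping in the choice of $k'$ and of the argument $M\varpi(k)+M-1$, arranged so that the two scalar thresholds $2C/(k'+1)$ and $\lambda_N M$ collapse exactly to $1/(k+1)$ and $1/(\varpi(k)+1)$; the analytic content (the Yosida norm bound and Lemma \ref{lem:approxerrorbound}) is comparatively routine.
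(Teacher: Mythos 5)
Your proposal is correct and follows essentially the same route as the paper's proof: same choice of $N$ via $\phi$ with $k'=\ceil{2C(k+1)}-1$, same witness $y^*=T_{\lambda_N}x_N$, same use of the Yosida bound $\norm{T_{\lambda_N}x_N}\leq\norm{T^\circ x_N}\leq M$ and of $\varpi$ to obtain the $H^*$-clause through $H^*[TJ^T_{\lambda_N}x_N,Tx_N,1/(k+1)]$, and the same application of Lemma \ref{lem:approxerrorbound} for the approximate fixed-point clause. The only (harmless) differences are that you spell out the monotonicity/Cauchy--Schwarz derivation of the Yosida norm estimate, which the paper leaves as an easy consequence of accretivity, and that you explicitly note the monotonicity of $\Phi$.
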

\begin{proof}
Clearly, by assumption on $\phi$, there exists an $N\in [\max\{\theta(M\varpi(k)+M-1),n\};\Phi(k,n)]$ with
\[
\frac{\norm{x_N-x_{N+1}}}{\mu_N}<\frac{1}{\left\lceil 2C(k+1)\right\rceil- 1+1}\leq\frac{1}{2C(k+1)}.
\]
Thus, we have
\[
\norm{x_N-x_{N+1}}=\mu_N\frac{\norm{x_N-x_{N+1}}}{\mu_N}<C\frac{1}{2C(k+1)}=\frac{1}{2(k+1)}.
\]
Thus, we get
\begin{align*}
\norm{x_N-J^S_{\mu_i}(x_N+\mu_i T_{\lambda_N}x_N)}&\leq\norm{x_N-x_{N+1}}+\vert\mu_N-\mu_i\vert\frac{\norm{x_N-x_{N+1}}}{\mu_N}\\
&<\frac{1}{2(k+1)}+C\frac{1}{2C(k+1)}\\
&=\frac{1}{k+1}
\end{align*}
for all $i\leq k$. Now, we have $H^*[TJ^T_{\lambda_N}x_N,Tx_N,1/(k+1)]$: using accretivity of the operator $T$, it is easy to see that
\[
\norm{x_N-J^T_{\lambda_N}x_N}\leq\lambda_N\norm{T^\circ x_N}\leq\lambda_NM
\]
and as $N\geq\theta(M\varpi(k)+M-1)$, we have 
\[
\lambda_N\leq\frac{1}{M(\varpi(k)+1)}
\]
and thus $\norm{x_N-J^T_{\lambda_N}x_N}\leq\frac{1}{\varpi(k)+1}$. By assumption on $\varpi$, we have $H^*[TJ^T_{\lambda_N}x_N,Tx_N,1/(k+1)]$. This implies $H^*[T_{\lambda_N}x_N,Tx_N,1/(k+1)]$ since we have $H^*[T_{\lambda_N}x_N,TJ^T_{\lambda_N}x_N,0]$ as $T_{\lambda_N}x_N\in TJ^T_{\lambda_N}x_N$. Therefore, in particular we have $\norm{T_{\lambda_N}x_N-z}\leq\frac{1}{k+1}$ for some $z\in Tx_N$ and therefore
\begin{align*}
\frac{1}{k+1}&\geq\norm{T_{\lambda_N}x_N-z}\\
&\geq\vert\norm{T_{\lambda_N}x_N}-\norm{z}\vert\\
&=\norm{z}-\norm{T_{\lambda_N}x_N}\\
&\geq\norm{T^\circ x_N}-\norm{T_{\lambda_N}x_N}\\
&=\vert\norm{T^\circ x_N}-\norm{T_{\lambda_N}x_N}\vert
\end{align*}
since $z\in Tx_N$ and thus $\norm{T_{\lambda_N}x_N}\leq\norm{T^\circ x_N}\leq\norm{z}$. Thus, we have shown $x_N\in\Gamma_k$.
\end{proof}
\begin{theorem}\label{thm:metastabbasecase}
Let $T,S$ be two maximally monotone operators on a finite dimensional Hilbert space $X$ such that $\mathrm{dom}S\subseteq\mathrm{dom}T$. Let $M\in\mathbb{N}^*$ be such that $M\geq \norm{T^\circ x^*}$ for any $x^*\in X_0$ and let $\varpi$ be a modulus of uniform continuity for $T$ w.r.t. $H^*$. Further, let $A\geq\sum_{n=0}^\infty\frac{\mu_n}{\lambda_n}$ and assume that $\lim_{n\to \infty}\lambda_n= 0$ with a rate of convergence $\theta$. Let $C\geq 1$ be an upper bound on both $\mathrm{diam}(\mu_n)$ and $(\mu_n)$. Further, let $\phi$ be s.t.
\[
\forall k,n\exists N\in [n;\phi(k,n)]\norm{x_N-x_{N+1}}/\mu_N<\frac{1}{k+1}
\]
and such that it is monotone w.r.t. $k$ and $n$. Let $L\geq\mathrm{diam}(x_n)$ and let $\xi$ be a Cauchy rate for $\sum_n\mu_n<\infty$. Then $(x_n)$ is Cauchy and, moreover, for any $k\in\mathbb{N}$ and any $g:\mathbb{N}\to\mathbb{N}$:
\[
\exists N\leq\Psi(k,g,\Phi,\chi,\tilde\xi,A,L)\forall i,j\in [N;N+g(N)]\left(\norm{x_i-x_j}\leq\frac{1}{k+1}\right)
\]
where $\Psi(k,g,\Phi,\chi,\tilde\xi,A,L)=\Psi_0(P,k,g,\Phi,\chi,\tilde\xi)$ defined by recursion with
\[
\begin{cases}
\Psi_0(0,k,g,\Phi,\chi,\tilde\xi)=0\\
\Psi_0(n+1,k,g,\Phi,\chi,\tilde\xi)=\Phi(\chi^M_g(\Psi_0(n,k,g,\Phi,\chi,\tilde\xi),8k+7,\tilde\xi(8k+7))
\end{cases}
\]
with $P=\ceil{2\ceil{8e^A(k+1)}\sqrt{d}L}^d+1$ where $d$ is the dimension of $X$, $\tilde\xi(n)=\xi(\ceil{(2M+1)e^A(n+1)}-1)$ and
\[
\Phi(k,n)=\phi\left(\left\lceil 2C(k+1)\right\rceil-1, \max\{\theta(M\varpi(k)+M-1),n\}\right)
\]
as well as
\begin{gather*}
\chi(r,n,m)=\max\{n+m\remin 1,\ceil{(r+1)\cdot m\cdot e^A}\},\\
\chi_g(n,k)=\chi(n,g(n),k),\chi_g^M(n,k)=\max\{\chi_g(i,k)\mid i\leq n\}.
\end{gather*}
Further, for any $k$ and any $g$ as above:
\[
\exists N\leq\Psi'(k,g,\Phi,\chi,\widetilde{\xi},A,L)\forall i,j\in [N;N+g(N)]\left(\norm{x_i-x_j}\leq\frac{1}{k+1}\text{ and }x_i\in\Gamma_k\right),
\]
where $\Psi'(k,g,\Phi,\chi,\widetilde{\xi},A,L)=\Psi(k_0,g,\Phi,\chi_{k},\widetilde{\xi},A,L)$ with $\Psi$ as before and with
\[
k_0:=\max\left\{k,\ceil*{\frac{\omega(k)-1}{2}}\right\}
\]
where
\[
\omega(k):=\max\{\varpi(2k+1),4k+3,\varpi(4M((k+1)^2)-1))\}
\]
and with
\[
\chi_k(r,n,m):=\max\{\delta(k),\chi(r,n,m)\}\text{ where }\delta(k):=2k+1
\]
with $\chi$ as before.
\end{theorem}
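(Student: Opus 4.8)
The plan is to recognize this as a direct instance of the general metastability macro of \cite{KLN2018}: once a sequence over a (uniformly) totally bounded space has been shown to be uniformly quasi-$(G,H)$-Fej\'er monotone with an explicit modulus, possesses a $\liminf$-bound toward a uniformly closed limit set, and has summable error terms equipped with a Cauchy rate, that macro outputs a rate of metastability via exactly the recursion recorded in the statement. So my first task is not to reinvent the combinatorial core but to verify that every modulus the macro consumes has already been assembled in the preceding lemmas, and then to read off the concrete substitutions.

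Concretely, I would collect the inputs as follows. From Lemma \ref{lem:quasiFejerModulus} I take uniform quasi-$(e^A\mathrm{id},\mathrm{id})$-Fej\'er monotonicity with modulus $\chi$ and error terms $\varepsilon_i=(2M+1)e^A\mu_i$. For these errors I must produce a Cauchy rate: since $\sum_i\varepsilon_i=(2M+1)e^A\sum_i\mu_i$, rescaling the given rate $\xi$ for $\sum_n\mu_n$ yields precisely $\tilde\xi(n)=\xi(\ceil{(2M+1)e^A(n+1)}-1)$, and a short tail estimate confirms $\sum_{i\geq\tilde\xi(n)}\varepsilon_i<1/(n+1)$. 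From Lemma \ref{lem:liminfbound} I take the $\liminf$-bound $\Phi$ toward $\Gamma_k$. Finally, finite dimensionality supplies total boundedness: the ball $\overline{B}(x_0;L)$ sits inside a cube of side $2L$, and partitioning that cube into a grid of mesh $1/(\ceil{8e^A(k+1)}\sqrt{d})$ produces at most $P=\ceil{2\ceil{8e^A(k+1)}\sqrt{d}L}^d+1$ cells, each of diameter at most $1/\ceil{8e^A(k+1)}$; this $P$ is exactly the recursion depth appearing in $\Psi_0$.

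With the inputs in place, the core is the pigeonhole-over-a-net argument driven by $\Psi_0$. I would iterate the composite map built from $\Phi$ (to reach a point near the limit set, using the $\liminf$-bound), from $\chi_g^M$ (to freeze the whole window $[N;N+g(N)]$ via the Fej\'er estimate, the parameter $8k+7$ leaving room for the error and net tolerances), and from $\tilde\xi$ (to discard the error tail); doing this $P$ times produces $P$ candidate windows. Since there are only $P-1$ net cells, two of the associated points must fall into a common cell, hence lie within $1/\ceil{8e^A(k+1)}$ of one another; propagating this through the quasi-Fej\'er inequality with its $e^A$ factor and the controlled error tail collapses the corresponding window to diameter $\leq 1/(k+1)$, which is the asserted metastability at the returned index $N\leq\Psi$. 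Genuine Cauchyness of $(x_n)$ is then immediate, since a rate of metastability valid for every $g$ is (non-effectively) equivalent to the Cauchy property.

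The enhancement to $\Psi'$ is where uniform closedness enters, and I expect the bookkeeping there to be the main obstacle. The plain macro only guarantees a metastable window, not that its points approximate $\Gamma$; to also force $x_i\in\Gamma_k$ I would rerun the construction at the sharpened parameter $k_0=\max\{k,\ceil{(\omega(k)-1)/2}\}$ and with the tightened modulus $\chi_k(r,n,m)=\max\{\delta(k),\chi(r,n,m)\}$, where $\delta(k)=2k+1$ and $\omega$ are taken from Lemma \ref{lem:intersectionGamma}. The $\liminf$-property already places some $x_N\in\Gamma_{k_0}$, and uniform closedness (Lemma \ref{lem:intersectionGamma}) then upgrades every point within the metastable window, which sits within $1/(\omega(k)+1)$ of that good point, to membership in $\Gamma_k$. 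The delicate part is checking that all three tolerances are simultaneously compatible, i.e.\ that the single choice of $k_0$ and $\chi_k$ makes the net resolution, the Fej\'er error, and the closedness radius fine enough at once; this is a matter of tracking the constants $\delta,\omega,e^A,M$ through the nested recursion rather than of any new idea.
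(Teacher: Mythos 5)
Your proposal is correct and follows essentially the same route as the paper: a direct instantiation of the metastability macro (Theorem 6.4 of \cite{KLN2018}) with the moduli from Lemmas \ref{lem:quasiFejerModulus} and \ref{lem:liminfbound}, the rescaled Cauchy rate $\tilde\xi$, the grid-based modulus of total boundedness giving $P$, and the uniform-closedness upgrade via $\chi_k$, $\delta$ and $\omega$ from Lemma \ref{lem:intersectionGamma} for the $\Psi'$ claim. The only difference is that you unfold the macro's internal pigeonhole-over-a-net argument, which the paper cites as a black box, and your attribution of the good point to the $\liminf$-property rather than to the level $(\chi_k)_g(N,m)\geq\delta(k)$ produced inside the macro is a minor imprecision that does not affect the argument since $k_0\geq\delta(k)$.
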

\begin{proof}
The theorem arises from a direct application of Theorem 6.4 from \cite{KLN2018} with $X:=X_0$, $F:=\Gamma\cap X_0$ and $AF_k:=\Gamma_k$ with $G:=e^A\mathrm{id}_{\mathbb{R}_+}$ and $H:=\mathrm{id}_{\mathbb{R}_+}$. Note for this that if $\xi$ is a Cauchy modulus for $\sum_n\mu_n<\infty$, then $n\mapsto\xi(\ceil{(2M+1)e^A(n+1)}-1)$ is a Cauchy modulus for
\[
(2M+1)e^A\sum_n\mu_n<\infty
\]
as we have
\[
\sum^\infty_{i=\xi(\ceil{(2M+1)e^A(n+1)}-1)}(2M+1)e^A\mu_i<(2M+1)e^A\frac{1}{\ceil{(2M+1)e^A(n+1)}}\leq\frac{1}{n+1}.
\]
Further, by Example 2.8 in \cite{KLN2018}, $P$ is correctly defined since $\left\lceil2(k+1)\sqrt{d}L\right\rceil^d$ is a modulus of total boundedness of $\overline{B}(0;L)$ and therefore also of $\overline{B}(x_0;L)=\overline{B}(0;L)+x_0$ as moduli of total boundedness are easily seen to be translation invariant over normed spaces. This clearly makes it a modulus of total boundedness for $X_0$ as well. Lemma \ref{lem:liminfbound} gives that $\Phi$ is a $\liminf$-bound and Lemma \ref{lem:quasiFejerModulus} gives that $\chi$ is a modulus of uniform quasi-Fej\'er monotonicity.\\

The second claim can be concluded from the first claim in the same way that Theorem 5.3 in \cite{KLN2018} is proved: $\chi_k$ is still a modulus of uniform quasi-Fej\'er monotonicity and thus
\[
\exists N\leq\Psi'\forall i,j\in [N;N+g(N)]\left( d(x_i,x_j)\leq\frac{1}{k_0+1}\leq\frac{1}{k+1}\right)
\]
by the first result where actually, by inspecting the proof of Theorem 6.4 given in \cite{KLN2018}, there is an index $n$ such that
\begin{enumerate}
\item $x_n\in\Gamma_{(\chi_k)_g(N,m)}$ for some $m$,
\item $\forall i\in [N;N+g(N)]\left( d(x_i,x_n)\leq\frac{1}{2k_0+2}\leq\frac{1}{\omega(k)+1}\right)$.
\end{enumerate}
As $(\chi_k)_g(N,m)=\chi_k(N,g(N),m)\geq\delta(k)$, we get $x_n\in\Gamma_{\delta(k)}$. By Lemma \ref{lem:intersectionGamma}, as $\delta$ and $\omega$ are moduli of uniform closedness, we get $x_i\in\Gamma_k$ for all $i\in [N;N+g(N)]$.
\end{proof}
The above theorem is a finitization of Theorem \ref{thm:MoudafiConv} under the additional assumption that $T$ is uniformly continuous w.r.t $H^*$ (which is suggested by the logical metatheorems used to obtain this analysis, see Remark \ref{rem:logic} for a further discussion of this): Assume that we are in the situation of the conclusion of the above theorem. The metastability of $(x_n)$ trivially (but non-effectively) implies that $(x_n)$ is Cauchy and thus convergent to some $x$.

By Lemma \ref{lem:intersectionGamma}, we obtain that $\bigcap_k\Gamma_k$ is uniformly closed w.r.t. $\Gamma_k$. Further, Lemma \ref{lem:liminfbound} implies that the above sequence has the $\liminf$-property with respect to the sequence $(\Gamma_k)$.

By Lemma 3.6 of \cite{KLN2018}, we get that $x\in\bigcap_k\Gamma_k\subseteq\Gamma$ and $x$ is therefore a solution to the original problem.\\

There are two further interesting notes to make here:
\begin{enumerate}
\item The assumption that $\lim_{n\to\infty}\norm{x_n-x_{n+1}}/\mu_n=0$ was weakened to \[
\liminf_{n\to\infty}\norm{x_n-x_{n+1}}/\mu_n=0.
\]
\item The assumption $\Gamma\neq\emptyset$ got weakened to the existence of a bound $L\geq\mathrm{diam}(x_n)$. This is indeed a weakening as if $\Gamma\neq\emptyset$, let $p\in\Gamma$ and $q\in Tp\cap Sp$. Then we get $\norm{x_n}\leq\norm{x_n-p}+\norm{p}$ and the former term $\norm{x_n-p}$ can be bounded using quasi-Fej\'er monotonicity as established in Lemma \ref{lem:qfejerinequality} by
\[
\norm{x_n-p}\leq e^A\norm{x_0-p}+(\norm{T^\circ p}+\norm{q})e^A\sum_{k}\mu_k<\infty
\]
with $A$ as in the above theorem and thus $\norm{x_n}$ is even bounded in that case.
\end{enumerate}
\begin{remark}[For logicians]\label{rem:logic}
The analysis of Moudafi's result as presented above can be explained by, and was obtained using, the general logical metatheorems for the extraction of uniform bounds from noneffective proofs involving set-valued operators developed in \cite{Pis2022a}. Further, the main analytical tools used in the proof of Theorem \ref{thm:MoudafiConv} given in \cite{Mou2015} have interesting connections to new proof theoretic notions introduced in \cite{Pis2022a} (where this case study was instrumental in uncovering these connections). We expect that these connections will influence future approaches to quantitative results in monotone operator theory and we thus want to detail them in the following, motivated by a discussion of the logical aspects of this case study of Moudafi's algorithm. In particular, we want to focus on
\begin{enumerate}
\item the operator $T^\circ(x)=P_{Tx}(0)$,
\item the use of the closure of the graph of both $T$ and $S$,
\item the assumption that $T$ is bounded on bounded sets.
\end{enumerate}

At first, the analysis presented in the previous parts of the paper can formalized in (extensions of) the system $\mathcal{T}^\omega$ introduced in \cite{Pis2022a}. These extensions amount to the treatment of two monotone operators together with the treatment of the uniform continuity of $T$ w.r.t. $H^*$ and the operator $T^\circ$ (see the discussion in \cite{Pis2022a}).

Now, the operator $T^\circ$ can be treated in the context of the logical metatheorems by a suitable additional constant of type $X(X)$ where $X$ is an additional abstract type for the respective Hilbert space (see \cite{GeK2008,Koh2005}) together with characterizing axioms (see \cite{Pis2022a})
\begin{enumerate}[(i)]
\item $\forall x^X(x\in\mathrm{dom}T\rightarrow T^\circ x\in Tx)$,
\item $\forall x^X,y^X(y\in Tx\rightarrow \langle y-T^\circ x,-T^\circ x\rangle\leq 0)$.
\end{enumerate}
For that, the functional $T^\circ$ (extended to the whole space by $T^\circ x:=0$ for $x\not\in\mathrm{dom}T$) needs to be majorizable in the sense of \cite{GeK2008,Koh2005} (see also \cite{Koh2008} for various perspectives on this), i.e. there needs to exist a function $f:\mathbb{N}\to\mathbb{N}$ such that 
\[
f\text{ is nondecreasing and }\forall x\in X,n\in\mathbb{N}\left(\norm{x}\leq n\rightarrow\norm{T^\circ x}\leq fn\right). 
\]
This turns out to connect intimately with a notion of majorizability for the set-valued operator $T$ introduced in \cite{Pis2022a}: Call a set-valued operator $T$ \emph{majorizable} if there exists a selection function $t:X\to X$ such that
\[
tx\in Tx\text{ for any }x\in\mathrm{dom}T
\]
and such that $t$ is majorizable in the sense of the above. Then $T$ is majorizable if and only if $T^\circ$ is majorizable.

Even further, the notion of $T$ being bounded on bounded sets can be recognized as a uniform majorizability assumption (see also \cite{Pis2022a}): $T$ is bounded on bounded sets if and only if it is uniformly majorizable in the sense that there exists some $f:\mathbb{N}\to\mathbb{N}$ such that any $t:X\to X$ with $tx\in Tx$ for $x\in\mathrm{dom}T$ and $tx=0$ otherwise is majorized by $f$.

So, majorizability of $T$ (or, equivalently, $T^\circ$) is already guaranteed by the much stronger assumption of uniform majorizability in Theorem \ref{thm:MoudafiConv}. However, as a consequence of the proof-theoretic analysis, this assumption of $T$ being bounded on bounded sets can be weakened to plain majorizability of $T$. Note that this was essentially also observed in \cite{RAC2020} from an analytic perspective in the context of extensions of Moudafi's result.

In the above analysis, this majorant is represented by the bound $M$: The bound is defined via the property $M\geq\norm{T^\circ x^*}$ for any $x^*\in X_0\subseteq\overline{B}_L(x_0)$. Given a majorant $f$ of $T$/$T^\circ$, this is (a bound on) the value of $f$ on (a bound on) $L+\norm{x_0}$. So, we see that in this concrete situation, even only local information on the majorant is required. Note again that this was also observed in \cite{RAC2020} from an analytical perspective.

Further discussions regarding this new notion of (uniform) majorizability of set-valued operators as well as its connections to other notions from monotone operator theory via a proof-theoretic perspective will be given in \cite{KP2022}.\\

Now, item (2), i.e. the closure of the graph of a maximally monotone operator, turns out to be equivalent to the extensionality principle
\[
\forall x,y\in X\left( x=y\rightarrow Tx=Ty\right)
\]
of the set-valued operator $T$ over the system $\mathcal{T}^\omega$ as shown in \cite{Pis2022a}. It is well known that extensionality can not be provable in systems which allow for the extraction of (uniform) bounds from proofs like, e.g., $\mathcal{T}^\omega$ and its extensions (see \cite{Koh2008} for various general discussions of this) and thus a uniform quantitative version of extensionality, namely some uniform continuity principle, has to be added.

As already discussed in \cite{KP2020}, there are certain problems with formulating one of the most widely known version of uniform continuity of a set-valued operator defined via the Hausdorff-metric $H$ (see \cite{MN2001})
\[
\forall\varepsilon>0\exists\delta>0\forall x,y\in\mathrm{dom}T\left(\norm{x-y}\leq\delta\rightarrow H(Tx,Ty)\leq\varepsilon\right).
\]
Motivated by this, the weaker notion of uniform continuity w.r.t. $H^*$ as discussed before is introduced in \cite{KP2020}. This uniform continuity w.r.t. $H^*$ can be added as an axiom to the system $\mathcal{T}^\omega$ such that one still obtains a bound extraction result (see \cite{Pis2022a}). Even further, this was recognized in \cite{Pis2022a} to be the uniform quantitative version of the following weak approximate extensionality principle
\[
\forall x,y\in X\left(x=y\rightarrow \forall k\in\mathbb{N}\left( H^*\left[Tx,Ty,\frac{1}{k+1}\right]\right)\right).
\]
This principle can be used in place of the full extensionality principle in some situations, for example whenever the rest of the proof following the application of extensionality is extensional in the variables.

Now, in Moudafi's proof, the application of extensionality of $T$ in form of the closure of the graph of $T$ can actually be recognized as just an application of this approximate extensionality principle:  extensionality of $T$ is used to conclude $y \in Tx$ given convergent subsequences $T_{\lambda_{n_k}} x_{n_k} \to y$ and $J^T_{\lambda_{n_k}}x_{n_k} \to x$ and using that $T_{\lambda_{n_k}} x_{n_k} \in T(J^T_{\lambda_{n_k}}x_{n_k})$. The rest of the proof is extensional as well as continuous in $y$ and this thus reduces to an application of the above approximate extensionality principle. The metatheorems then immediately upgrade $T$ to being uniformly continuous w.r.t. $H^*$ and a modulus for this crucially features in the analysis presented above.

Further, in Moudafi's proof, extensionality of $S$ can actually be completely avoided by instead using the resolvent and the fact this the resolvent is itself provably extensional (see \cite{Pis2022a}).
\end{remark}
\section{Moduli of regularity and rates of convergence}
\subsection{General theorems on rates of convergence}
As mentioned in the introduction, Fej\'er monotone sequences, in general, do not have a computable rate of convergence. However, the existence of such can be guaranteed in some situations where additional quantitative assumptions are present. Choices for such were extensively studied in \cite{KLAN2019} under the very general notion of \emph{moduli of regularity} (generalizing moduli of uniqueness and other regularity notions known from optimization like error bounds, weak sharp minima and metric subregularity, see the discussion in \cite{KLAN2019}) and based on a proof theoretic perspective, \cite{KLAN2019} presents theorems converting  such moduli of regularity for Fej\'er monotone sequences, modulo some additional minor quantitative assumptions, into rates of convergence for the sequence. To apply these results in our context, we first extend the main quantitative result from \cite{KLAN2019} to the case of quasi-Fej\'er monotone sequences.\\

For that, we follow the setup and notation from \cite{KLAN2019} (which is conflicting with the notation used in the previous section which was derived from \cite{KLN2018}, but the context will make it clear which meaning is intended): let $(X,d)$ be a metric space, $F:X\to\overline{\mathbb{R}}$ with $\overline{\mathbb{R}}=\mathbb{R}\cup\{-\infty,+\infty\}$ be a mapping and assume $\mathrm{zer}F\neq\emptyset$ where $\mathrm{zer}F$ is the set of zeros of $F$.
\begin{definition}[\cite{KLAN2019}]
Let $z\in\mathrm{zer}F$ and $r>0$. A function $\phi:(0,\infty)\to(0,\infty)$ is a \emph{modulus of regularity for $F$ w.r.t. $\mathrm{zer}F$ and $\overline{B}(z;r)$} if for all $\varepsilon>0$ and $x\in\overline{B}(z;r)$:
\[
\vert F(x)\vert <\phi(\varepsilon)\text{ implies }D(x,\mathrm{zer} F)<\varepsilon
\]
where $D$ is the distance function between points and sets. It is a \emph{modulus of regularity for $F$ w.r.t. $\mathrm{zer}F$} if there is a $z$ such that it is a modulus of regularity for $F$ w.r.t. $\mathrm{zer}F$ and $\overline{B}(z;r)$ for any $r>0$.
\end{definition}
Adapting \cite{KLN2018}, we introduce $G$- and $H$-moduli for the functions $G,H$ in the generalized notion of quasi-Fej\'er monotonicity: a function $\alpha_G:\mathbb{R}^*_+\to\mathbb{R}^*_+$ is a \emph{$G$-modulus for $G$} if 
\[
\forall\varepsilon >0\forall a\in\mathbb{R}_+\left( a\leq\alpha_G(\varepsilon)\text{ implies }G(a)\leq\varepsilon\right)
\]
and $\beta_H:\mathbb{R}^*_+\to\mathbb{R}^*_+$ is a \emph{$H$-modulus for $H$} if 
\[
\forall\varepsilon >0\forall a\in\mathbb{R}_+\left( H(a)\leq\beta_H(\varepsilon)\text{ implies }a\leq\varepsilon\right).
\]
For convenience, we assume that a Cauchy rate for a sequence $\sum_i\varepsilon_i<\infty$ is now a mapping taking real values, i.e. a Cauchy rate will now be a function $\xi:\mathbb{R}^*_+\to\mathbb{N}$ which fulfills
\[
\sum_{i=\xi(\delta)}^\infty\varepsilon_i<\delta
\]
for any $\delta\in\mathbb{R}^*_+$.

We then can generalize the main result of \cite{KLAN2019}, i.e. Theorem 4.1, to quasi-$(G,H)$-Fej\'er monotone sequences:
\begin{theorem}\label{thm:modregthm}
Let $(X,d)$ be a metric space and $F:X\to\overline{\mathbb{R}}$ with $\mathrm{zer}F\neq\emptyset$. Let $(x_n)$ be quasi-$(G,H)$-Fej\'er monotone w.r.t. $\mathrm{zer}F$. Let $\alpha_G$ be a G-modulus for $G$, $\beta_H$ be an $H$-modulus for $H$ and let $\beta'_H$ be such that
\[
H(x)\leq a\text{ implies }x\leq\beta'_H(a).
\]
for any $x,a\in\mathbb{R}_+$. Let $b\geq G(d(x_0,z))$ for some $z\in\mathrm{zer}F$, $e\geq\sum_n\varepsilon_n$ and suppose there is a $\tau$ such that
\[
\forall\delta >0\forall n\in\mathbb{N}\exists N\in [n;\tau(\delta,n)](\vert F(x_N)\vert <\delta).
\]
Let $\phi$ be a modulus of regularity for $F$ w.r.t. $\mathrm{zer}F$ and $\overline{B}(z;\beta'_H(b+e))$ and let $\xi$ be a Cauchy rate for $\sum_i\varepsilon_i<\infty$.
Then $(x_n)$ is Cauchy with Cauchy modulus $\theta$:
\[
\forall \delta>0\forall n,m\geq\theta(\delta):=\tau\left(\phi\left(\alpha_G\left(\frac{\beta_H(\delta/2)}{2}\right)\right),\xi\left(\frac{\beta_H(\delta/2)}{2}\right) \right)(d(x_n,x_m)<\delta).
\]
\end{theorem}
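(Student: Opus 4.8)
The plan is to run the standard ``locate one good iterate, then propagate forward'' argument for Fej\'er monotone sequences, but to budget the target accuracy through the four moduli $\alpha_G,\beta_H,\beta'_H$ and $\xi$ instead of through the identity, as in the pure Fej\'er case of \cite{KLAN2019}. As a preliminary step I would secure that every iterate lies in the ball on which the modulus of regularity $\phi$ is available, namely $\overline{B}(z;\beta'_H(b+e))$. Instantiating quasi-$(G,H)$-Fej\'er monotonicity at $n=0$, $p=z$ and arbitrary $m$ yields $H(d(x_m,z))\leq G(d(x_0,z))+\sum_{i=0}^{m-1}\varepsilon_i\leq b+e$, and the defining property of $\beta'_H$ then gives $d(x_m,z)\leq\beta'_H(b+e)$ for all $m$. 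This is exactly what makes $\phi$ applicable at each approximate zero produced below.

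Next, fixing $\delta>0$, I would split the ``$H$-budget'' $\beta_H(\delta/2)$ into two equal halves and set $\varepsilon_0:=\alpha_G(\beta_H(\delta/2)/2)$ and $n_0:=\xi(\beta_H(\delta/2)/2)$. Applying the approximate-zero hypothesis $\tau$ with accuracy $\delta_0:=\phi(\varepsilon_0)$ and starting index $n_0$ produces an index $N\in[n_0;\tau(\delta_0,n_0)]$ with $\abs{F(x_N)}<\delta_0$; here $\tau(\delta_0,n_0)$ is precisely $\theta(\delta)$. Since $x_N\in\overline{B}(z;\beta'_H(b+e))$, the modulus of regularity converts $\abs{F(x_N)}<\phi(\varepsilon_0)$ into $D(x_N,\mathrm{zer}F)<\varepsilon_0$, so there is a zero $p\in\mathrm{zer}F$ with $d(x_N,p)<\varepsilon_0=\alpha_G(\beta_H(\delta/2)/2)$, and the $G$-modulus then yields $G(d(x_N,p))\leq\beta_H(\delta/2)/2$.

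Finally I would propagate this single good index forward and absorb the accumulated error via $\xi$. For any $n=N+j\geq N$, quasi-$(G,H)$-Fej\'er monotonicity gives $H(d(x_n,p))\leq G(d(x_N,p))+\sum_{i=N}^{n-1}\varepsilon_i\leq\beta_H(\delta/2)/2+\sum_{i=N}^{\infty}\varepsilon_i$, and since $N\geq n_0=\xi(\beta_H(\delta/2)/2)$ the tail is $<\beta_H(\delta/2)/2$, whence $H(d(x_n,p))\leq\beta_H(\delta/2)$ and the $H$-modulus $\beta_H$ delivers $d(x_n,p)\leq\delta/2$. As $\theta(\delta)=\tau(\delta_0,n_0)\geq N$, this holds for all $n\geq\theta(\delta)$, so the triangle inequality closes the argument via $d(x_n,x_m)\leq d(x_n,p)+d(x_m,p)\leq\delta$ for all $n,m\geq\theta(\delta)$, exhibiting $\theta$ as a Cauchy modulus (the distinction between $\leq\delta$ and $<\delta$ being immaterial for a rate of convergence, since one may evaluate $\theta$ at any smaller accuracy).

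The main obstacle I anticipate is the bookkeeping of this budget split rather than any deep new idea. One must verify that halving $\beta_H(\delta/2)$ is the correct allocation, so that the $G$-term (handled through the regularity modulus $\phi$ and the $G$-modulus $\alpha_G$) and the summability error (handled through the Cauchy rate $\xi$) each consume exactly one half of the $H$-budget, and that the two pieces of data simultaneously fed into $\tau$ --- the regularity threshold $\phi(\alpha_G(\beta_H(\delta/2)/2))$ and the lower index $\xi(\beta_H(\delta/2)/2)$ --- are both respected by the single extracted index $N$. The replacement of the identity by $\alpha_G$, $\beta_H$ and the auxiliary $\beta'_H$ (the last used only to keep all iterates inside the regularity ball) is the sole genuinely new ingredient over the Fej\'er case, and the derivation otherwise parallels Theorem~4.1 of \cite{KLAN2019}.
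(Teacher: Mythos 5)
Your proposal is correct and follows essentially the same route as the paper's proof: bounding all iterates into $\overline{B}(z;\beta'_H(b+e))$ via quasi-Fej\'er monotonicity at $n=0$, extracting one approximate zero $x_N$ with $N\geq\xi(\beta_H(\delta/2)/2)$ via $\tau$, converting it through $\phi$ and $\alpha_G$ into a nearby true zero, and propagating forward with the tail of $\sum\varepsilon_i$ absorbing the second half of the $\beta_H(\delta/2)$ budget. Your closing remark about the $\leq\delta$ versus $<\delta$ distinction is a fair observation about a detail the paper passes over silently, but it does not affect the validity of either argument.
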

\begin{proof}
The proof is a straightforward modification of that of Theorem 4.1 from \cite{KLAN2019}: Let $\delta>0$ be given. By quasi-$(G,H)$-Fej\'er monotonicity, we have 
\[
H(d(x_n,z))\leq G(d(x_0,z))+\sum \varepsilon_i\leq b+e,
\]
i.e. $(x_n)\subseteq\overline{B}(z;\beta'_H(b+e))$. By assumption we have
\[
\exists N\in \left[\xi\left(\frac{\beta_H(\delta/2)}{2}\right);\theta(\delta)\right]\left(\vert F(x_N)\vert <\phi\left(\alpha_G\left(\frac{\beta_H(\delta/2)}{2}\right)\right)\right).
\]
As $\phi$ is a corresponding modulus of regularity, we get
\[
D(x_N,\mathrm{zer} F)<\alpha_G\left(\frac{\beta_H(\delta/2)}{2}\right)
\]
and therefore, there exists a $y\in\mathrm{zer} F$ with $d(x_N,y)<\alpha_G\left(\frac{\beta_H(\delta/2)}{2}\right)$. This yields $G(d(x_N,y))\leq\frac{\beta_H(\delta/2)}{2}$. Thus for any 
\[
n\geq N\geq\xi\left(\frac{\beta_H(\delta/2)}{2}\right),
\]
by quasi-$(G,H)$-monotonicity, we obtain
\begin{align*}
H(d(x_n,y))&\leq G(d(x_N,y))+\sum_{i=N}^\infty\varepsilon_i\\
&\leq \beta_H(\delta/2)/2 + \beta_H(\delta/2)/2\\
&\leq\beta_H(\delta/2)
\end{align*}
and thus $d(x_n,y)\leq\delta/2$ for any such $n$, i.e. in particular $d(x_n,x_m)\leq\delta$ for any $n,m\geq\theta(\delta)$.
\end{proof}
\begin{remark}
$\beta'_H$ is a quantitative version of the property $(H1)$ from \cite{KLN2018} (see Lemma 4.2 there). As apparent from the statement and proof, however, we don't need the full function but only its value at $b+e$. In fact, even an upper bound $B\geq\beta'_H(b+e)$ is sufficient as long as $\phi$ is a modulus of regularity for $F$ w.r.t. $\mathrm{zer}F$ and $\overline{B}(z;B)$.
\end{remark}
\subsection{An application to Moudafi's algorithm}
In applications to problems involving zeros of set-valued operators, \cite{KLAN2019} describes the following approach for phrasing the corresponding problems in terms of the setup introduced above: Define $F_A(x):=D(0,A(x))$ for a set-valued operator $A:X\to \mathcal{P}(X)$. If we have that
\[
D(0,A(x))=0\text{ implies }x\in\mathrm{zer}A\tag{$\dagger$}
\]
for all $x\in X$, then we also get $\mathrm{zer} F_A=\mathrm{zer} A$ which makes $F_A$ a suitable instantiations of the abstract function $F$ from before regarding zeros of $A$. This always applies to maximally monotone operators $A$ if $X$ is a Hilbert space as $A(x)$ is then closed for any $x$. For differences $A=T-S$, even with $T,S$ maximally monotone, $A$ may not be maximally monotone anymore but $(T-S)(x)$ is still closed for any $x$ if $T(x),S(x)$ are and thus $(\dagger)$ holds in that case. Thus, we may choose $F_{T-S}$ if we are interested in zeros of differences of two maximally monotone operators.\\

Now, as exhibited in the above analysis of Moudafi's result, the sequence generated by the algorithm actually converges towards some $x$ such that $T^\circ x\in Sx$ under the above additional quantitative assumptions. 

In fact, it is easy to see that $\bigcap_k\Gamma_k=\{x^*\mid T^\circ x^*\in Sx^*\}$ for the previously used approximations $\Gamma_k$: if $T^\circ x^*\in Sx^*$, then immediately $x^*\in\Gamma_k$ for any $k$ by setting $y^*=T^\circ x^*$. Conversely, if $x^*\in\Gamma_k$ for any $k$, we can obtain $T^\circ x^*\in Sx^*$ as in the proof of Lemma \ref{lem:intersectionGamma}. 

The above $F_{T-S}$ is therefore not really faithful regarding the previous analysis and we in turn consider
\[
F_1(x):=\norm{x-J^S_{\mu_0}(x+\mu_0T^\circ x)}
\]
as $x=J^S_{\mu_0}(x+\mu_0T^\circ x)$ if and only if $T^\circ x\in Sx$. Indeed, this is in particular supported by the following lemma which establishes a relation between the approximations $\Gamma_k$ and the property $F_1(x)\leq\frac{1}{k+1}$.
\begin{lemma}\label{lem:conversionGamma}
Let $M\in\mathbb{N}^*$ be such that $M\geq\norm{T^\circ x^*}$ for all $x\in X_0$ and let $B\in\mathbb{N}^*$ with $B\geq\mu_0$. If $x\in\Gamma_{\kappa(k)}$, then $F_1(x)\leq\frac{1}{k+1}$ where
\[
\kappa(k):=4(M+1)(B(4k+4)-1)^2-1.
\]
\end{lemma}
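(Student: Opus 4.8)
The plan is to unpack the membership $x\in\Gamma_{\kappa(k)}$, extract a witness $y^*\in\Gamma_{\kappa(k)}(x)$, and then bound $F_1(x)=\norm{x-J^S_{\mu_0}(x+\mu_0T^\circ x)}$ by comparing the approximate fixed-point condition available for $y^*$ with the exact expression defining $F_1$, using nonexpansivity of the resolvent. First I would record the three defining properties of $y^*$ at level $\kappa(k)$: namely $\vert\norm{y^*}-\norm{T^\circ x}\vert\leq\frac{1}{\kappa(k)+1}$, the predicate $H^*[y^*,Tx,\frac{1}{\kappa(k)+1}]$, and (since $\kappa(k)\geq 0$, so that $i=0$ is a legal index in the third clause) $\norm{x-J^S_{\mu_0}(x+\mu_0 y^*)}\leq\frac{1}{\kappa(k)+1}$. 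Then, by the triangle inequality and nonexpansivity of $J^S_{\mu_0}$ together with $B\geq\mu_0$,
\[
F_1(x)\leq\norm{x-J^S_{\mu_0}(x+\mu_0 y^*)}+\mu_0\norm{y^*-T^\circ x}\leq\frac{1}{\kappa(k)+1}+B\norm{y^*-T^\circ x},
\]
so the whole statement reduces to a good bound on $\norm{y^*-T^\circ x}$.

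The \emph{main obstacle}, and the only nonroutine step, is precisely this bound: the witness $y^*$ is controlled only in norm relative to the minimal-norm element $T^\circ x$ and lies only approximately inside $Tx$ in the $H^*$-sense, and norm-closeness to $T^\circ x$ does not by itself yield closeness to $T^\circ x$. The device that resolves this is Lemma \ref{lem:Tcircmodunifcont}(2), which converts control of the \emph{squared}-norm gap into genuine closeness to the minimal-norm element. I would extract from $H^*[y^*,Tx,\frac{1}{\kappa(k)+1}]$ an actual element $z\in Tx$ with $\norm{y^*-z}\leq\frac{1}{\kappa(k)+1}$, combine this with the norm condition on $y^*$ to obtain $0\leq\norm{z}-\norm{T^\circ x}\leq\frac{2}{\kappa(k)+1}$ (the lower bound using minimality of $T^\circ x$), and then, factoring through $\norm{z}+\norm{T^\circ x}\leq 2(M+1)$, deduce
\[
\norm{z}^2-\norm{T^\circ x}^2\leq\frac{4(M+1)}{\kappa(k)+1}.
\]

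With the definition $\kappa(k)+1=4(M+1)(B(4k+4)-1)^2$ this right-hand side equals exactly $\frac{1}{(B(4k+4)-1)^2}$, so Lemma \ref{lem:Tcircmodunifcont}(2) (applied with $k'=B(4k+4)-2$) yields $\norm{T^\circ x-z}\leq\frac{1}{B(4k+4)-1}$, whence $\norm{y^*-T^\circ x}\leq\frac{2}{B(4k+4)-1}$ after absorbing the negligible term $\frac{1}{\kappa(k)+1}\leq\frac{1}{B(4k+4)-1}$. This is exactly where the quadratic shape of $\kappa$ and the prefactor $4(M+1)$ originate. Finally I would assemble the pieces, again using $\kappa(k)+1\geq B(4k+4)-1$, to get
\[
F_1(x)\leq\frac{1}{B(4k+4)-1}+\frac{2B}{B(4k+4)-1}=\frac{1+2B}{B(4k+4)-1},
\]
and close with the elementary arithmetic check $(1+2B)(k+1)\leq B(4k+4)-1$, which holds whenever $B\geq 1$ and $k\geq 0$ since it rearranges to $(k+1)(2B-1)\geq 1$. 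This gives $F_1(x)\leq\frac{1}{k+1}$, as claimed.
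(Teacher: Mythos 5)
Your proposal is correct and follows essentially the same route as the paper: extract the witness $y^*$, pass via $H^*$ to an actual element $z\in Tx$, convert the norm gap into a squared-norm gap with the factor $2(M+1)$, and invoke Lemma \ref{lem:Tcircmodunifcont}(2) to get genuine closeness to $T^\circ x$ before finishing with nonexpansivity of $J^S_{\mu_0}$ and $B\geq\mu_0$. The only differences are cosmetic reorderings and a slightly different final arithmetic (your $\frac{1}{B(4k+4)-1}$ from the lemma is in fact the precise bound, where the paper writes $\frac{1}{B(4k+4)}$).
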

\begin{proof}
Let $x\in\Gamma_{\kappa(k)}$, i.e. there exists a $y$ s.t. $\vert\norm{y}-\norm{T^\circ x}\vert \leq\frac{1}{\kappa(k)+1}$, $H^*[y,Tx,1/(\kappa(k)+1)]$ and $\norm{x-J^S_{\mu_i}(x+\mu_iy)}\leq\frac{1}{\kappa(k)+1}$ for all $i\leq\kappa(k)$.

Then by item (2), there exists a $w\in Tx$ s.t. $\norm{y-w}\leq\frac{1}{\kappa(k)+1}$, i.e.
\[
\vert\norm{w}-\norm{T^\circ x}\vert\leq\vert\norm{y}-\norm{T^\circ x}\vert+\vert\norm{y}-\norm{w}\vert\leq\frac{2}{\kappa(k)+1}.
\]
Therefore
\begin{align*}
\vert\norm{w}^2-\norm{T^\circ x}^2\vert&\leq \vert\norm{w}+\norm{T^\circ x}\vert\vert\norm{w}-\norm{T^\circ x}\vert\\
&\leq (2M+2)\vert\norm{w}-\norm{T^\circ x}\vert\\
&\leq (2M+2)\frac{2}{\kappa(k)+1}\\
&\leq \frac{1}{(B(4k+4)-1)^2}.
\end{align*}
By the modulus of uniqueness for $T^\circ$, Lemma \ref{lem:Tcircmodunifcont}, (2), we get $\norm{w-T^\circ x}\leq\frac{1}{B(4k+4)}$. Thus, as $\kappa(k)+1\geq B(4k+4)$:
\begin{align*}
\norm{y-T^\circ x}&\leq\norm{y-w}+\norm{w-T^\circ x}\\
&\leq \frac{1}{\kappa(k)+1}+\frac{1}{B(4k+4)}\\
&\leq\frac{1}{B(2k+2)}.
\end{align*}
Now using
\[
\norm{x-J^S_{\mu_0}(x+\mu_0y)}\leq\frac{1}{\kappa(k)+1},
\]
we get (using nonexpansivity of $J^S_{\mu_0}$)
\begin{align*}
\norm{x-J^S_{\mu_0}(x+\mu_0T^\circ x)}&\leq\norm{x-J^S_{\mu_0}(x+\mu_0y)}+\norm{J^S_{\mu_0}(x+\mu_0T^\circ x)-J^S_{\mu_0}(x+\mu_0y)}\\
&\leq\frac{1}{\kappa(k)+1}+\mu_0\norm{T^\circ x-y}\\
&\leq\frac{1}{\kappa(k)+1}+\frac{1}{2k+2}\\
&\leq\frac{1}{k+1}
\end{align*}
as $\kappa(k)\geq 2k+1$.
\end{proof}
Having in mind that $\bigcap_k\Gamma_k=\{x^*\mid T^\circ x^*\in Sx^*\}$ as discussed before, another natural version for $F$ is given by
\[
F_2(x):=D(T^\circ x,Sx).
\]
Note also that $F_2$ relates to $F_{T-S}$ in the following way: for any $x$ (in $X_0$), we have $F_{T-S}(x)\leq F_2(x)$. Indeed, this function $F_2$ can be used if we assume a further modulus of uniform continuity for $S$ w.r.t. $H^*$ as the following lemma shows:
\begin{lemma}\label{lem:conversionGammaPrime}
Let $M\in\mathbb{N}^*$ be such that $M\geq\norm{T^\circ x^*}$ for all $x\in X_0$, let $B\in\mathbb{N}^*$ and $B'\in\mathbb{N}$ with $B\geq\mu_0\geq 2^{-B'}$, and let $\widehat{\varpi}$ be a modulus of uniform continuity for $S$ w.r.t. $H^*$. If $x\in\Gamma_{\widehat{\kappa}(k)}$, then $F_2(x)\leq\frac{1}{k+1}$ where
\[
\widehat{\kappa}(k):=\kappa\left(\max\left\{\widehat{\varpi}(2k+1),2^{B'+1}(k+1)\remin 1\right\}\right)
\]
with $\kappa$ from the previous lemma.
\end{lemma}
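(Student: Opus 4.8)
The plan is to reduce the statement about $F_2(x) = D(T^\circ x, Sx)$ to the already-established Lemma \ref{lem:conversionGamma} concerning $F_1(x) = \norm{x - J^S_{\mu_0}(x + \mu_0 T^\circ x)}$, at the cost of invoking the extra modulus of uniform continuity $\widehat{\varpi}$ for $S$. The key observation is that the approximate fixed-point condition $x \approx J^S_{\mu_0}(x + \mu_0 T^\circ x)$ controlled by $F_1$ should translate into the assertion that $T^\circ x$ is approximately in $Sx$, which is exactly what $F_2$ measures. First I would assume $x \in \Gamma_{\widehat{\kappa}(k)}$ and unfold $\widehat\kappa(k) = \kappa(\max\{\widehat\varpi(2k+1),\, 2^{B'+1}(k+1)\remin 1\})$, so that by Lemma \ref{lem:conversionGamma} one immediately gets $F_1(x) \leq \frac{1}{m+1}$ where $m := \max\{\widehat\varpi(2k+1),\, 2^{B'+1}(k+1)\remin 1\}$, hence both $F_1(x) \leq \frac{1}{\widehat\varpi(2k+1)+1}$ and $F_1(x)\leq \frac{1}{2^{B'+1}(k+1)}$.

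Next I would produce an explicit near-zero of $T^\circ x - (\text{something in }Sx)$ out of the approximate fixed point. Writing $p := J^S_{\mu_0}(x + \mu_0 T^\circ x)$, the resolvent identity gives $\mu_0^{-1}(x + \mu_0 T^\circ x - p) \in Sp$, i.e. the point $s := T^\circ x + \mu_0^{-1}(x - p)$ lies in $Sp$. Since $\norm{x - p} = F_1(x)$ is small and $\mu_0 \geq 2^{-B'}$, the correction term satisfies $\norm{s - T^\circ x} = \mu_0^{-1}\norm{x-p} \leq 2^{B'} F_1(x) \leq 2^{B'}\cdot\frac{1}{2^{B'+1}(k+1)} = \frac{1}{2(k+1)}$; this is precisely where the lower bound $\mu_0 \geq 2^{-B'}$ is used to absorb the division by $\mu_0$. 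So $T^\circ x$ is within $\frac{1}{2(k+1)}$ of a genuine element $s$ of $Sp$.

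The remaining gap is that $s \in Sp$ rather than $s \in Sx$, and $F_2(x)$ requires distance to $Sx$. This is exactly where the modulus $\widehat\varpi$ for $S$ enters: since $\norm{x - p} = F_1(x) \leq \frac{1}{\widehat\varpi(2k+1)+1}$, the Hausdorff-like uniform continuity of $S$ yields $H^*[Sp, Sx, \frac{1}{2k+1+1}]$, wait---more carefully, $H^*[Sx, Sp, \frac{1}{2k+2}]$ in the appropriate direction, so there is some $s' \in Sx$ with $\norm{s - s'} \leq \frac{1}{2k+2}$. Combining, $D(T^\circ x, Sx) \leq \norm{T^\circ x - s'} \leq \norm{T^\circ x - s} + \norm{s - s'} \leq \frac{1}{2(k+1)} + \frac{1}{2(k+1)} = \frac{1}{k+1}$, which is the claim. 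The main obstacle I anticipate is bookkeeping the direction of the Hausdorff-like predicate $H^*$ correctly: $H^*[P,Q,\varepsilon]$ only guarantees that every point of $P$ has a near point in $Q$, so I must ensure the continuity of $S$ is applied so as to move the realizer $s\in Sp$ to a near point $s'\in Sx$ and not the other way around, and confirm that $\widehat\varpi$ as a modulus w.r.t.\ $H^*$ delivers the inclusion in the needed direction from $\norm{x-p}$ small.
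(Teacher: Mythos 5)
Your proposal is correct and follows essentially the same route as the paper: apply Lemma \ref{lem:conversionGamma} at level $\max\{\widehat{\varpi}(2k+1),2^{B'+1}(k+1)\remin 1\}$, use the resolvent identity to exhibit $\mu_0^{-1}(x+\mu_0T^\circ x-p)\in Sp$, bound its distance to $T^\circ x$ by $\mu_0^{-1}\norm{x-p}\leq\frac{1}{2(k+1)}$ via $\mu_0\geq 2^{-B'}$, and transfer it into $Sx$ via $H^*[Sp,Sx,\frac{1}{2k+2}]$ from the modulus $\widehat{\varpi}$. The only wobble is your mid-sentence ``correction'' to $H^*[Sx,Sp,\cdot]$ --- the direction you actually need (and correctly use in the final step) is $H^*[Sp,Sx,\cdot]$, which the symmetric quantification in the definition of the modulus does deliver.
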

\begin{proof}
By the previous lemma, we get
\[
\norm{x-J^S_{\mu_0}(x+\mu_0T^\circ x)}\leq\frac{1}{\max\left\{\widehat{\varpi}(2k+1),2^{B'+1}(k+1)\remin 1\right\}+1}.
\]
Thus we get
\[
H^*\left[SJ^S_{\mu_0}(x+\mu_0T^\circ x),Sx,\frac{1}{2k+2}\right]
\]
Thus, there exist a $z\in Sx$ such that
\[
\norm{\frac{1}{\mu_0}(x+\mu_0T^\circ x-J^S_{\mu_0}(x+\mu_0T^\circ x))-z}\leq\frac{1}{2k+2}.
\]
This entails
\begin{align*}
\norm{z-T^\circ x}&\leq\norm{\frac{1}{\mu_0}(x+\mu_0T^\circ x-J^S_{\mu_0}(x+\mu_0T^\circ x))-z}+\frac{1}{\mu_0}\norm{x-J^S_{\mu_0}(x+\mu_0T^\circ x)}\\
&\leq\frac{1}{2k+2}+\frac{1}{\mu_0}\frac{1}{2^{B'+1}(k+1)}\\
&\leq\frac{1}{2k+2}+\frac{1}{2k+2}\\
&\leq\frac{1}{k+1}.
\end{align*}
Thus, as $z\in Sx$, we have $D(T^\circ x,Sx)\leq\frac{1}{k+1}$.
\end{proof}
As an immediate consequence of Theorem \ref{thm:modregthm} together with the above lemmas, we get the following result for converting the quantitative assumptions into a rate of convergence for the sequence given by Moudafi's algorithm.
\begin{theorem}\label{thm:modregthmdiff}
Let $T,S$ be two maximally monotone operators on a Hilbert space $X$ such that $\mathrm{dom}S\subseteq\mathrm{dom}T$ and $\mathrm{zer}(F)\neq\emptyset$ and let $L\geq\mathrm{diam}(x_n)$. Let $A\geq\sum_{n=0}^\infty\frac{\mu_n}{\lambda_n}$. Let $C\geq 1$ be an upper bound on both $\mathrm{diam}(\mu_n)$ and $(\mu_n)$, $B\in\mathbb{N}^*$ with $B\geq\mu_0$ and let $M\in\mathbb{N}^*$ be such that $M\geq\norm{T^\circ x^*}$ for any $x^*\in X_0$. Further, let $\Phi$ be s.t.
\[
\forall k,n\exists N\in [n;\Phi(k,n)]\norm{x_N-x_{N+1}}/\mu_N<\frac{1}{k+1}
\]
and such that it is monotone w.r.t. $k$ and $n$, $\xi$ be a Cauchy rate (with input $k\in\mathbb{N}$) for $\sum_n\mu_n\leq d<\infty$ and $b\geq \norm{x_0-z}$ for some $z\in\mathrm{zer}(T-S)$. Let $\theta$ be a rate of convergence for $\lambda_n\to 0$, i.e.
\[
\forall k\forall n\geq\theta(k)\left(\lambda_n\leq\frac{1}{k+1}\right).
\]
Further, let $\varpi$ be a modulus of uniform continuity for $T$ w.r.t. $H^*$.

Let $\phi$ be a modulus of regularity for $F_1$ w.r.t. $\mathrm{zer}(F_1)$ and $\overline{B}(z;e^Ab+d)$. Then $(x_n)$ is Cauchy with Cauchy modulus
\[
\forall \varepsilon>0\forall n,m\geq\theta(\varepsilon):=\widehat{\Phi}\left(\kappa\left(\ceil*{\frac{1}{\phi\left(\frac{\varepsilon}{4e^A}\right)}}\right),\tilde{\xi}\left(\frac{\varepsilon}{4}\right)\right)(d(x_n,x_m)<\varepsilon)
\]
where
\[
\tilde\xi(\varepsilon)=\xi\left(\ceil*{(2M+1)e^A\left(\ceil*{\frac{1}{\varepsilon}}+1\right)}-1\right)
\]
and
\[
\widehat{\Phi}(k,n):=\Phi\left(\left\lceil 2C(k+1)\right\rceil-1, \max\{\theta(M\varpi(k)+M-1),n\}\right).
\]
as well as
\[
\kappa(k):=4(M+1)(B(4k+4)-1)^2-1.
\]
Moreover, if we assume $\widehat{\varpi}$ to be a modulus of uniform continuity for $S$ w.r.t. $H^*$ and $B'\in\mathbb{N}$ such that $\mu_0\geq 2^{-B'}$, then the above claim holds for $\phi$ being a modulus of regularity for $F_2$ (or $F_{T-S}$) w.r.t $\mathrm{zer}(F_2)$ (or $\mathrm{zer}(F_{T-S})$) and $\overline{B}(z;e^Ab+d)$ and with $\kappa$ replaced by $\widehat{\kappa}$ defined by
\[
\widehat{\kappa}(k)=\kappa\left(\max\left\{\widehat{\varpi}(2k+1),2^{B'+1}(k+1)\remin 1\right\}\right).
\]
\end{theorem}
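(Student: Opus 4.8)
The plan is to obtain the statement as a direct instantiation of the abstract Theorem \ref{thm:modregthm} to the data coming from Moudafi's algorithm, taking $F:=F_1$ (respectively $F_2$ or $F_{T-S}$), $G:=e^A\mathrm{id}_{\mathbb{R}_+}$ and $H:=\mathrm{id}_{\mathbb{R}_+}$. First I would fix a zero $z\in\mathrm{zer}(F_1)$, i.e.\ a point with $T^\circ z\in Sz$, and observe that instantiating Lemma \ref{lem:qfejerinequality} at $x^*=z$ and $y^*=T^\circ z$ makes every term $\norm{z-J^S_{\mu_{n+k}}(z+\mu_{n+k}T^\circ z)}$ vanish, so that the $\frac{1}{r+1}$-contributions drop out; bounding the products $\prod(1+\mu_{n+j}/\lambda_{n+j})$ by $e^A$ and $\norm{T^\circ z}$ by $M$ then yields genuine quasi-$(e^A\mathrm{id}_{\mathbb{R}_+},\mathrm{id}_{\mathbb{R}_+})$-Fej\'er monotonicity w.r.t.\ $\mathrm{zer}(F_1)$ with summable errors $\varepsilon_i=(2M+1)e^A\mu_i$. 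From this reading one directly reads off the remaining moduli required by Theorem \ref{thm:modregthm}: since $G(a)=e^Aa$ one takes $\alpha_G(\varepsilon)=\varepsilon e^{-A}$, and since $H=\mathrm{id}_{\mathbb{R}_+}$ both $\beta_H$ and $\beta'_H$ are the identity. The bound $e\geq\sum_n\varepsilon_n$ is furnished by $(2M+1)e^A d$ via the Cauchy rate $\xi$ for $\sum_n\mu_n\leq d$, and the quasi-Fej\'er estimate together with $b\geq\norm{x_0-z}$ confirms that the orbit $(x_n)$ stays inside the ball on which $\phi$ is assumed to be a modulus of regularity.

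The only genuinely new work is to supply the hypothesis of Theorem \ref{thm:modregthm} asking for a function $\tau$ with $\forall\delta>0\,\forall n\,\exists N\in[n;\tau(\delta,n)]\,(\vert F_1(x_N)\vert<\delta)$. This I would build by chaining two earlier results: Lemma \ref{lem:liminfbound} provides the $\liminf$-bound $\widehat\Phi$ for $(x_n)$ w.r.t.\ $\Gamma_k$, i.e.\ an index $N\in[n;\widehat\Phi(k,n)]$ with $x_N\in\Gamma_k$, while Lemma \ref{lem:conversionGamma} converts membership in $\Gamma_{\kappa(k)}$ into $F_1(x_N)\leq\frac{1}{k+1}$. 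Composing these and choosing $k=\ceil{1/\delta}$, so that $\frac{1}{k+1}<\delta$, one obtains $\tau(\delta,n)=\widehat\Phi(\kappa(\ceil{1/\delta}),n)$ as the desired witness bound.

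It then remains to substitute the concrete moduli into the Cauchy modulus produced by Theorem \ref{thm:modregthm}, namely $\theta(\delta)=\tau(\phi(\alpha_G(\beta_H(\delta/2)/2)),\xi(\beta_H(\delta/2)/2))$, and simplify. With $\beta_H=\mathrm{id}_{\mathbb{R}_+}$ and $\alpha_G(\varepsilon)=\varepsilon e^{-A}$ the regularity input collapses to $\phi(\varepsilon/(4e^A))$, while the role of the abstract Cauchy rate is played by $\tilde\xi$, which one checks is a Cauchy rate for the scaled series $(2M+1)e^A\sum_n\mu_n$ in the real-valued sense. Feeding $\delta':=\phi(\varepsilon/(4e^A))$ and $n:=\tilde\xi(\varepsilon/4)$ into the formula for $\tau$ then reproduces exactly the advertised modulus $\widehat\Phi(\kappa(\ceil{1/\phi(\varepsilon/(4e^A))}),\tilde\xi(\varepsilon/4))$. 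For the variant with $F_2$ (and hence $F_{T-S}$, using $F_{T-S}\leq F_2$ pointwise over $X_0$), I would run the identical argument but replace the bridge Lemma \ref{lem:conversionGamma} and the modulus $\kappa$ by Lemma \ref{lem:conversionGammaPrime} and $\widehat\kappa$, which is precisely where the extra modulus of uniform continuity $\widehat\varpi$ for $S$ enters.

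I expect the main obstacle to be organizational rather than mathematical: keeping the distinct zero sets apart --- $\mathrm{zer}(F_1)=\mathrm{zer}(F_2)=\{x^*\mid T^\circ x^*\in Sx^*\}$ as against $\Gamma=\mathrm{zer}(F_{T-S})$ --- so that the quasi-Fej\'er monotonicity, the $\tau$-witness, and the modulus of regularity are each phrased with respect to the correct set, and then verifying that the nested composition of $\widehat\Phi$, $\kappa$ (or $\widehat\kappa$), $\alpha_G$, $\beta_H$ and $\tilde\xi$ telescopes to exactly the stated closed form.
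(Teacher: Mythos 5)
Your proposal is correct and follows essentially the same route as the paper: a direct instantiation of Theorem \ref{thm:modregthm} with $G=e^A\mathrm{id}_{\mathbb{R}_+}$, $H=\mathrm{id}_{\mathbb{R}_+}$, the quasi-Fej\'er monotonicity w.r.t.\ $\mathrm{zer}(F_1)$ coming from the Fej\'er estimates of Section 2 (with the $\frac{1}{r+1}$-terms vanishing at exact zeros), and the $\tau$-witness obtained by composing the $\liminf$-bound of Lemma \ref{lem:liminfbound} with the bridge Lemma \ref{lem:conversionGamma} (resp.\ Lemma \ref{lem:conversionGammaPrime} for $F_2$ and $F_{T-S}$). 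The paper's proof is just a terser version of exactly this bookkeeping.
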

\begin{proof}
The proof is a direct application of Theorem \ref{thm:modregthm}. We want to note a few things, however. Again, we work over $X_0$. Then Lemma \ref{lem:quasiFejerModulus} actually established quasi-($e^A\mathrm{id}_{\mathbb{R}_+},\mathrm{id}_{\mathbb{R}_+}$)-Fej\'er monotonicity w.r.t. $\mathrm{zer}(F)$. Naturally, $\delta\mapsto \delta/e^A$ is a G-modulus for $e^A\mathrm{id}_{\mathbb{R}_+}$ and $\delta\mapsto\delta$ is a $H$-modulus for $\mathrm{id}_{\mathbb{R}_+}$ and $\beta'_H$ can just be set to be the identity. Further, the function
\[
(\varepsilon,n)\mapsto\widehat{\Phi}\left(\kappa\left(\ceil*{\frac{1}{\varepsilon}}\right),n\right)
\]
is a $\liminf$-modulus for $F_1$ by Lemma \ref{lem:liminfbound} and Lemma \ref{lem:conversionGamma}.
\end{proof}
Note that it is no longer necessary for $X$ to be finite dimensional in this case. This in particular relies on the fact that the previous moduli do not rely on finite dimensionality either.

\section*{Acknowledgments}
This paper is a revised version of parts of my master thesis \cite{Pis2022} written under the supervision of Prof. Dr. Ulrich Kohlenbach at TU Darmstadt. I want to thank Prof. Kohlenbach. His suggestion to work on Moudafi's algorithm proved to be a very interesting case study in the context of proof mining and I have immensely enjoyed and benefited from our various discussions on the subject matter.

\bibliographystyle{plain}
\bibliography{ref}

\end{document}